\documentclass[11pt]{amsart} \usepackage{verbatim}
\usepackage{rotating} 
\usepackage{amssymb} \usepackage{color} \usepackage{amsmath}


\hyphenation{arch-i-med-e-an}


\newtheorem{theorem}{Theorem}

\newtheorem{proposition}[theorem]{Proposition}
\newtheorem{prop}[theorem]{Proposition}

\newtheorem{lemma}[theorem]{Lemma}

\newtheorem{question}[theorem]{Question}
\newtheorem*{definition}{Definition}

\theoremstyle{plain}
\numberwithin{equation}{theorem}

\theoremstyle{remark}

\newtheorem{remark}{Remark}

\newcommand{\C}{{\mathbb C}}

\newcommand{\Q}{{\mathbb Q}}

\newcommand{\cJ}{{\mathcal J}}

\newcommand{\cV}{{\mathcal V}}

\newcommand{\cS}{{\mathcal S}}

\newcommand{\Kbar}{\overline K}
\newcommand{\kbar}{\overline{k}}

\newcommand{\Qbar}{\overline{\Q}}

\DeclareMathOperator{\bN}{\mathbb{N}}

\newcommand{\bA}{{\mathbb A}}
\newcommand{\bC}{{\mathbb C}}
\newcommand{\bF}{{\mathbb F}}
\newcommand{\bG}{{\mathbb G}}

\newcommand{\bP}{{\mathbb P}}
\newcommand{\bQ}{{\mathbb Q}}

\newcommand{\bZ}{{\mathbb Z}}

\newcommand{\lra}{\longrightarrow}

\newcommand{\cT}{\mathcal{T}}

\newcommand{\bQb}{\overline \bQ}

\newcommand{\hhat}{{\widehat h}}

\newcommand{\PrePer}{\operatorname{PrePer}}


\newenvironment{parts}[0]{%
  \begin{list}{}%
    {\setlength{\itemindent}{0pt}
     \setlength{\labelwidth}{1.5\parindent}
     \setlength{\labelsep}{.5\parindent}
     \setlength{\leftmargin}{2\parindent}
     \setlength{\itemsep}{0pt}
     }%
   }%
  {\end{list}}
\newcommand{\Part}[1]{\item[\upshape#1]}

\begin{document}

\title{Greatest common divisors of iterates of polynomials}

\author{L.-C.~Hsia}
\address{
Liang-Chung Hsia\\
Department of Mathematics\\
National Taiwan Normal University\\
Taipei, Taiwan, ROC
}
\email{hsia@math.ntnu.edu.tw}

\author{T.~J.~Tucker}
\address{
Thomas Tucker\\
Department of Mathematics\\
University of Rochester\\
Rochester, NY 14627\\
USA
}
\email{ttucker@math.rochester.edu}

\thanks{The first author was partially supported by MOST Grant
  104-2115-M-003-004-MY2. The second author was partially supported
  by NSF Grant DMS-0101636.}

\begin{abstract}
  Following work of Bugeaud, Corvaja, and Zannier for integers, Ailon
  and Rudnick prove that for any multiplicatively independent
  polynomials, $a, b \in \bC[x]$, there is a polynomial $h$ such that
  for all $n$, we have
  \[ \gcd(a^n - 1, b^n - 1) \mid h\]
  We  prove a compositional analog of this theorem, namely that if
  $f, g \in \bC[x]$ are nonconstant compositionally independent
  polynomials and $c(x) \in \bC[x]$, then there are at most finitely
  many $\lambda$ with the property that there is an $n$ such that $(x
  - \lambda)$ divides  $\gcd(f^{\circ n}(x) - c(x), g^{\circ n}(x) - c(x))$.
  \end{abstract}

\maketitle

In the paper~\cite{BCZ}, Bugeaud, Corvaja, and Zannier obtained an
upper bound for the greatest common divisors among two families of integer sequences. More precisely, let $a$ and $b$ be two positive integers that are multiplicatively independent and let $\epsilon > 0$ be given. Then for all $n$, we have $ \gcd(a^n - 1, b^n -1) \ll_\epsilon \exp(\epsilon n) $ where the implied constant is  independent of $n$.

Since Bugeaud, Corvaja, and Zannier's paper appeared, there have been
many extensions and generalizations of their results,
see for example \cite{AR, CZ05, Luca, Sil04, SilverGCD}. In the setting over function field of characteristic zero, Ailon and Rudnick~\cite{AR} obtained a stronger upper bound.  They showed that for two multiplicatively independent nonconstant polynomials $a, b \in \bC[x]$,  there is a polynomial $h\in \bC[x]$, depending on $a$ and $b$ such that $\gcd(a^n - 1, b^n - 1) \mid h$ for all positive integer $n$.
We note here that the result of Ailon and Rudnick also holds when one takes the
greatest common divisors of $a^m - 1$ and $b^n - 1$ across all pairs
of positive integers $m$ and $n$ (not merely those where $m = n$).

Instead of taking multiplicative powers of polynomials, one can consider iterated compositions of polynomials and look for an upper bound on the degrees of the greatest common divisors among two such sequences of polynomials as asked by A.~Ostafe in~\cite[Problem 4.2]{Ostafe}.
In this paper, we prove a compositional analog of theorem of Ailon and
Rudnick described above.

In the following, for a polynomial $q$, we let $q^{\circ n}$ denote
the composition of $q$ with itself $n$ times. To state our theorem
precisely, we need a definition of compositional independence.
\begin{definition}
  We say two polynomials $f$ and $g$ are {\em compositionally
    independent} if the semigroup generated by $f$ and $g$ under
  composition is isomorphic to the free semigroup with two generators.
  This is equivalent to the property that whenever $i_1, \dots, i_s$,
  $j_1, \dots, j_s$, $\ell_1, \dots, \ell_t$, $m_1, \dots m_t$ are
  positive integers such that
\[ f^{\circ i_1} \circ g^{\circ j_1} \circ \cdots \circ  f^{\circ i_s}
\circ g^{\circ j_s} =  f^{\circ \ell_1} \circ g^{\circ m_1} \circ \cdots
\circ  f^{\circ \ell_t} \circ g^{\circ m_t}, \]
we must have $s=t$, and $i_k= \ell_k$, $j_k = m_k$ for $k= 1,
\dots, s$.
\end{definition}

Under the compositional independence condition, our first result is the finiteness of the irreducible factors of $\gcd(f^{\circ m}(x) - c(x), g^{\circ n}(x) - c(x))$ where $f, g$ and $c$ are polynomials with complex coefficients.  More precisely,  we have the following theorem which answers Ostafe's question.

\begin{theorem}\label{main thm}
  Let $f(x)$ and $g(x)$ be two compositionally independent polynomials
  in $\C[x]$, at least one of which has degree greater than one.
  Suppose that $c(x)$ is not a compositional power of $f$ or $g$.
  Then there are at most finitely many $\lambda \in \bC$ such that
\[ (x - \lambda) | \gcd(f^{\circ m}(x) - c(x), g^{\circ n}(x) -
c(x)) \]
for some positive integers $m,n$.
\end{theorem}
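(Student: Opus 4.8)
The plan is to convert the divisibility into a statement about canonical heights and then apply arithmetic equidistribution. Observe that $(x-\lambda)\mid f^{\circ m}(x)-c(x)$ exactly when $f^{\circ m}(\lambda)=c(\lambda)$, so the exceptional set $S$ consists of the $\lambda$ for which $f^{\circ m}(\lambda)=c(\lambda)=g^{\circ n}(\lambda)$ for some $m,n\ge 1$. Every such $\lambda$ is algebraic over the finitely generated field $K:=\bQ(\text{coefficients of }f,g,c)$, so we work over $K$, equipped with its canonical heights $\hat h_f,\hat h_g$ and the arithmetic of finitely generated fields (Moriwaki heights, and arithmetic equidistribution over finitely generated fields). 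We may assume $\deg f\ge 2$. Since $c$ is not a compositional power of $f$ or of $g$, each polynomial $f^{\circ m}(x)-c(x)$ and $g^{\circ n}(x)-c(x)$ is nonzero, so $\{\lambda:f^{\circ m}(\lambda)=c(\lambda)\}$ is finite for each $m$; hence if $S$ were infinite we could choose distinct $\lambda_k\in S$ with witnesses $(m_k,n_k)$ for which $m_k\to\infty$ (and $n_k\to\infty$ when $\deg g\ge 2$).

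Functoriality of heights gives the estimate. From $f^{\circ m_k}(\lambda_k)=c(\lambda_k)$,
\[
d^{m_k}\,\hat h_f(\lambda_k)=\hat h_f\bigl(c(\lambda_k)\bigr)=(\deg c)\,\hat h_f(\lambda_k)+O(1),
\]
where $d=\deg f\ge 2$ and the $O(1)$ depends only on $f$ and $c$; hence $\hat h_f(\lambda_k)=O(d^{-m_k})\to 0$ and in particular $h(\lambda_k)=O(1)$. If $\deg g=1$ we finish at once: when $g$ has finite order the equation $g^{\circ n}(\lambda)=c(\lambda)$ alone confines $\lambda$ to a finite set, and when $g$ has infinite order that same equation --- rewritten using the Weil height on $\bG_m$ or $\bG_a$, exactly as in the proof of Ailon--Rudnick's theorem --- forces $n_k$ to be bounded in terms of $h(\lambda_k)=O(1)$, so $\lambda$ again lies in a finite set. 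Hence assume $\deg g\ge 2$; symmetrically $\hat h_g(\lambda_k)\to 0$.

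The $\lambda_k$ are now distinct with $\hat h_f(\lambda_k)\to 0$, so by arithmetic equidistribution (Baker--Rumely, Favre--Rivera-Letelier, Chambert-Loir, Yuan) the Galois orbits of the $\lambda_k$ equidistribute, at every place $v$ of $K$, toward the canonical measure $\mu_{f,v}$ on the Berkovich line over $\bC_v$; the same sequence also equidistributes toward $\mu_{g,v}$, so $\mu_{f,v}=\mu_{g,v}$ for all $v$. Equivalently (the difference of the associated local canonical heights is harmonic on each Berkovich $\bP^1$, hence constant, and the constant is $0$ since $\hat h_f,\hat h_g\ge 0$ both vanish on preperiodic points) one gets $\hat h_f=\hat h_g$ and hence $\PrePer(f)=\PrePer(g)$. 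By the classification of polynomials with a common set of preperiodic points (Baker--DeMarco and Yuan--Zhang for the arithmetic input, with the structure of the exceptional cases going back to Ritt and Julia and to Medvedev--Scanlon), $f$ and $g$ then share a common iterate, or both are linearly conjugate to power maps $x^{\pm e}$, or both to $\pm$ Chebyshev polynomials. In each case the semigroup generated by $f$ and $g$ fails to be free: in the power and Chebyshev cases this is a short computation inside the semigroup of twisted monomials $\{\gamma x^e\}$, using that equality of the canonical measures at \emph{every} place forces the relevant twisting constant to be a root of unity, whereupon a relation such as $g\circ g=g\circ f$ appears. This contradicts compositional independence, so $S$ is finite.

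The main obstacle I foresee is exactly this last step: extracting a \emph{compositional} relation from the measure-theoretic rigidity $\mu_{f,v}=\mu_{g,v}$. Compositional independence is substantially stronger than non-commutativity, and it is in the power-map and Chebyshev cases --- where the Julia set has an infinite symmetry group, unlike the generic case, where a finite symmetry group makes a common iterate evident --- that one must rule out \emph{all} relations among words in $f$ and $g$, and this is where the hypothesis is really consumed. A secondary technical point is organizing the arithmetic over the finitely generated field $K$ (canonical heights, Northcott for bounded-degree points, and equidistribution at all places), or else reducing to a number field by a specialization that preserves both compositional independence and the hypothesis that $c$ is not a compositional power.
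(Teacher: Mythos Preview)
Your overall strategy matches the paper's: show $\hhat_f(\lambda_k),\hhat_g(\lambda_k)\to 0$ along a putative infinite sequence, apply equidistribution to obtain $\hhat_f=\hhat_g$ and hence equal complex Julia sets, then invoke the classification of polynomials sharing a Julia set to force a compositional relation. The case split on $\deg g$ also matches. For the structural endgame, the references you actually need are Baker--Er\"emenko and Schmidt--Steinmetz rather than Baker--DeMarco or Medvedev--Scanlon: outside the Chebyshev and power-map families, those papers show that every word in $f,g$ equals $\tau^{\circ i}\circ q^{\circ j}$ for a fixed polynomial $q$ and a finite-order linear $\tau$, which immediately kills freeness of the semigroup; the power and Chebyshev cases are then dispatched exactly as you sketch (the twisting constant is a preperiodic point, hence a root of unity).

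The real gap is the reduction from the finitely generated field $K$ to a number field, which you flag but do not carry out. The paper does not use Moriwaki heights, nor a specialization preserving compositional independence (the latter is not obviously available and would be the hard part of your proposed route). Instead, when $K$ is a function field it argues directly: once $\hhat_f=\hhat_g$, if either map is isotrivial then so is the other (an elementary argument, or Baker's weak Northcott), and after conjugation $f,g\in\Qbar[x]$; since each $\lambda_i$ solves $f^{\circ m_i}(\lambda_i)=g^{\circ n_i}(\lambda_i)$ it lies in $\Qbar$, whence $c\in\Qbar[x]$ as well, and one is back over a number field where $\PrePer(f)=\PrePer(g)$ follows. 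This step is short but essential, and it is exactly where your sketch is incomplete.

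For $\deg g=1$ your route (bound $n_k$ from $h(\lambda_k)=O(1)$) differs from the paper's and inherits the same function-field gap: it needs the linear coefficient of $g$ to have positive height, which fails when that coefficient lies in the constant field. The paper instead observes that each $\lambda_k$ has degree at most $\deg c+1$ over $K$ (since $g^{\circ n}(x)-c(x)$ has bounded degree) and then applies Northcott to the bounded-height, bounded-degree family $\{\lambda_k\}$; over a function field it invokes Baker's theorem on bounded-degree small points to force isotriviality of $f$ and again reduces to $\Qbar$.
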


The restriction on the degrees of the two polynomials $f$ and $g$ in
Theorem~\ref{main thm} is necessary. As the examples at the beginning
of Section~\ref{linear} demonstrate that Theorem~\ref{main thm} must
be modified when $f$ and $g$ are both linear. If we restrict to the
case $m=n$ in Theorem~\ref{main thm}, then we still obtain a
finiteness result when the two polynomials $f$ and $g$ are both
linear.

\begin{theorem}\label{linear-thm}
  Let $f$ and $g$ be two compositionally independent linear polynomials and let $c$ be any polynomial.  Then there is a polynomial $h \in \bC[x]$ such that
\[ \gcd(f^{\circ n}(x) - c(x), g^{\circ n}(x) -
c(x))  \mid h  \]
for all positive integers $n$.
\end{theorem}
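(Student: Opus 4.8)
The plan is to reduce the statement to the finiteness of a set of ``bad'' points, and then to establish that finiteness by combining elementary facts about affine self-maps of $\bP^1$ with the Skolem--Mahler--Lech theorem and a divisor computation. For the reduction, put $D_n=\gcd\!\big(f^{\circ n}(x)-c(x),\,g^{\circ n}(x)-c(x)\big)$ and $h_n=f^{\circ n}(x)-g^{\circ n}(x)$. Since $f$ and $g$ are linear, $\deg h_n\le 1$, and $h_n\ne 0$ because compositional independence forces $f^{\circ n}\ne g^{\circ n}$ for every $n$ (distinct words in the free semigroup). As $D_n$ divides $(f^{\circ n}-c)-(g^{\circ n}-c)=h_n$, it is squarefree of degree at most one, so it suffices to show that the set $\Lambda$ of $\lambda\in\bC$ which are common zeros of $f^{\circ n}(x)-c(x)$ and $g^{\circ n}(x)-c(x)$ for \emph{some} $n$ is finite; one then takes $h=\prod_{\lambda\in\Lambda}(x-\lambda)$. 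Two cases are immediate and I would dispose of them at the outset: if $c(x)=x$, then $f^{\circ n}(x)-x$ and $g^{\circ n}(x)-x$ are, up to nonzero scalars, $x-\gamma_f$, $x-\gamma_g$, or nonzero constants, so $D_n=1$ for all $n$; and if $f$ and $g$ have equal multipliers different from $1$, then $h_n$ is a nonzero constant and $\Lambda=\emptyset$.

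Next I would encode membership in $\Lambda$. Compositional independence implies at once that $f$ and $g$ are not both translations (two translations commute) and that any non-translation among them has multiplier that is not a root of unity (otherwise some iterate is the identity). Write $f(x)=a(x-\gamma_f)+\gamma_f$ with $a$ not a root of unity, and set $\phi(t)=\dfrac{c(t)-\gamma_f}{t-\gamma_f}$; then a point $\lambda\ne\gamma_f$ is a zero of $f^{\circ n}(x)-c(x)$ exactly when $\phi(\lambda)=a^n$, while $\gamma_f$ itself never lies in $\Lambda$. So ``$\lambda\in\Lambda$ with witness $n$'' amounts to $\phi(\lambda)=a^n$ together with either (i) $g(x)=\alpha(x-\gamma_g)+\gamma_g$ with $\alpha$ not a root of unity and $\psi(\lambda)=\alpha^n$, where $\psi(t)=\dfrac{c(t)-\gamma_g}{t-\gamma_g}$ and necessarily $\gamma_f\ne\gamma_g$ (equal fixed points would make $f,g$ commute); or (ii) $g(x)=x+\beta$ and $\chi(\lambda)=n$, where $\chi(t)=\dfrac{c(t)-t}{\beta}$.

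For each $n$ there are at most $\deg\phi$ solutions of $\phi(\lambda)=a^n$, so if $\Lambda$ is infinite then infinitely many $n$ occur as witnesses. If $\phi$, $\psi$, or $\chi$ is constant, then $c$ is forced to be a very special polynomial and one of the defining conditions pins down $n$, giving at most one witness; so assume $\phi$ and its companion ($\psi$ in case (i), $\chi$ in case (ii)) are non-constant. They are then algebraically dependent over $\bC$, so there is an irreducible $F\in\bC[X,Y]$ with $F(\phi(t),\psi(t))\equiv 0$ (resp.\ $F(\chi(t),\phi(t))\equiv 0$); evaluating at the witnesses gives $F(a^n,\alpha^n)=0$ (resp.\ $F(n,a^n)=0$) for infinitely many $n$. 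In case (ii) this is impossible: $n\mapsto F(n,a^n)$ is an exponential polynomial whose bases $a^0,\dots,a^{\deg_X F}$ are distinct (as $a$ is not a root of unity), so by Skolem--Mahler--Lech it vanishes along a complete arithmetic progression, and comparing coefficients there forces $F\equiv 0$, a contradiction. In case (i) the same reasoning applies unless the bases $a^i\alpha^j$ collide --- that is, unless $a$ and $\alpha$ are multiplicatively dependent.

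That leftover case --- (i) with $a,\alpha$ multiplicatively dependent --- is the main obstacle. There $\langle(a,\alpha)\rangle$ is an infinite rank-one subgroup of $\bG_m^2$ meeting the plane curve $\{F(X,Y)=0\}$ in infinitely many points, so by a classical theorem of Lang on curves in tori, that curve must be a translate of a one-dimensional subtorus; hence $F=\mathrm{const}\cdot(X^p-c_1Y^q)$ with $\gcd(p,q)=1$ and $c_1\ne 0$, so that $\phi(t)^p=c_1\,\psi(t)^q$ identically. I would finish by a divisor computation on $\bP^1$. If $c(\gamma_f)\ne\gamma_f$, then $\phi$, hence $\phi^p$, has a pole at $\gamma_f$, whereas $\psi$ has no pole at $\gamma_f$ (its only possible finite pole is at $\gamma_g\ne\gamma_f$), so $\psi^q$ is regular there --- impossible; symmetrically if $c(\gamma_g)\ne\gamma_g$. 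And if $c(\gamma_f)=\gamma_f$ and $c(\gamma_g)=\gamma_g$, then $\phi$ and $\psi$ are polynomials of the same degree: comparing the orders of the poles at $\infty$ forces $p=q=1$, comparing leading coefficients gives $\phi=\psi$, and this (using $\gamma_f\ne\gamma_g$) forces $c(x)=x$, which was excluded. Each subcase is a contradiction, so $\Lambda$ is finite. The genuinely delicate step is this last one: as is typical of GCD problems of this type, the crux is seeing precisely how the geometric hypothesis --- here $\gamma_f\ne\gamma_g$, which is where compositional independence enters --- obstructs the multiplicative relation $\phi^p=c_1\psi^q$.
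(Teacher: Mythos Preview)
Your approach is genuinely different from the paper's and, modulo one gap, it works. The paper proves the algebraic case (Proposition~\ref{algebraic}) by heavy diophantine machinery --- Roth's theorem, Siegel's theorem on $S$-integral points, and the Corvaja--Zannier lower bound for heights at $S$-unit points --- and then specializes from $\bC$ down to $\Qbar$. Your route through Skolem--Mahler--Lech and the structure of curves in $\bG_m^2$ is more elementary, avoids specialization entirely, and exploits very cleanly the observation that $D_n\mid h_n=f^{\circ n}-g^{\circ n}$ forces $\deg D_n\le 1$.

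The gap is in the step ``hence $F=\mathrm{const}\cdot(X^p-c_1Y^q)$''. Lang's theorem (or, more simply, Bezout: all the $(a^n,\alpha^n)$ already lie on the irreducible subtorus through $(a,\alpha)$) tells you only that $\{F=0\}$ is a translate of a one-dimensional subtorus, and such translates come in two shapes: $X^p=c_1Y^q$ and $X^pY^q=c_1$ (with $p,q>0$ coprime). Your divisor analysis treats only the first shape; in the second you would need $\phi^p\psi^q\equiv c_1$, and the pole of $\phi$ at $\gamma_f$ can be cancelled by a zero of $\psi$ there, so the argument breaks down. This is not a technicality: the paper's Remark after Proposition~\ref{algebraic} exhibits $f(x)=x/2$, $g(x)=2x+1$, $c(x)=-(x+1)$ with $a\alpha=1$, where $\phi\psi\equiv 1$ and there \emph{is} a common root for every $n$.

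What saves you is that compositional independence rules out the second shape altogether --- but not via $\gamma_f\ne\gamma_g$, as you suggest. If the relation between the multipliers were $a^p\alpha^q=1$ with $p,q>0$, then $f^{\circ p}\circ g^{\circ q}$ and $g^{\circ q}\circ f^{\circ p}$ would both have leading coefficient $a^p\alpha^q=1$, hence both be translations, hence commute; this produces a nontrivial word relation. So compositional independence forces the relation to be $a^p=\alpha^q$ with $p,q>0$, the subtorus is $\{X^p=Y^q\}$, and your divisor computation goes through (indeed with $c_1=1$). Once you insert this observation, your proof is complete; your closing remark that compositional independence enters only through $\gamma_f\ne\gamma_g$ should be amended accordingly.
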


Putting Theorem \ref{linear-thm} together with Theorem~\ref{main thm} under the condition that the composition power $m=n$, then for {\em any} polynomials $c(x)$ we have the same conclusion.

\begin{theorem}\label{just-n}
Let $f$ and $g$ be two compositionally independent polynomials. Then there are at most finitely many $\lambda \in \bC$ such that
\[ (x - \lambda) | \gcd(f^{\circ n}(x) - c(x), g^{\circ n}(x) -
c(x)) \]
for some positive integer $n$.
\end{theorem}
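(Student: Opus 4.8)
The plan is to derive Theorem~\ref{just-n} by combining the two preceding results according to the degrees of $f$ and $g$. The key observation is that Theorem~\ref{just-n} only asserts finiteness of the set of $\lambda$ when we restrict to the diagonal $m=n$, so we are free to split into cases. \textbf{Case 1: at least one of $f,g$ has degree greater than one.} If $c$ is not a compositional power of $f$ or $g$, then Theorem~\ref{main thm} applies directly (indeed it gives the stronger statement allowing $m\neq n$), so the set of such $\lambda$ is finite. It therefore remains to handle the situation where $c = f^{\circ k}$ (or $c = g^{\circ k}$) for some $k \geq 1$; note both cannot hold simultaneously, since $f^{\circ k} = g^{\circ \ell}$ would contradict compositional independence. \textbf{Case 2: both $f$ and $g$ are linear.} Here Theorem~\ref{linear-thm} applies for \emph{any} polynomial $c$, and produces a single polynomial $h$ with $\gcd(f^{\circ n}(x)-c(x), g^{\circ n}(x)-c(x)) \mid h$ for all $n$; in particular any $\lambda$ with $(x-\lambda)$ dividing the gcd must satisfy $h(\lambda)=0$, and there are only finitely many such $\lambda$.

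So the only remaining work is Case~1 with $c$ a compositional power of (say) $f$, i.e.\ $c = f^{\circ k}$. In that case $f^{\circ n}(x) - c(x) = f^{\circ n}(x) - f^{\circ k}(x)$. For $n = k$ this is identically zero and imposes no condition; for $n > k$ we may write $f^{\circ n}(x) - f^{\circ k}(x) = f^{\circ k}\bigl(f^{\circ (n-k)}(x)\bigr) - f^{\circ k}(x)$, and the point is that a root $\lambda$ of $\gcd(f^{\circ n}(x) - f^{\circ k}(x),\, g^{\circ n}(x) - f^{\circ k}(x))$ has $f^{\circ k}(f^{\circ(n-k)}(\lambda)) = f^{\circ k}(\lambda)$. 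The plan is to argue that such $\lambda$ are preperiodic for $f$ with bounded tail and period: more precisely, since $\deg f^{\circ k} \geq 1$, the fiber $(f^{\circ k})^{-1}(f^{\circ k}(\lambda))$ is finite of size at most $\deg f^{\circ k}$, but to get a \emph{uniform} bound one instead observes that $f^{\circ(n-k)}(\lambda)$ and $\lambda$ lie in the same fiber of $f^{\circ k}$, and then uses that $g^{\circ n}(\lambda) = f^{\circ k}(\lambda)$ as well. The cleanest route is: replace the pair $(f^{\circ n}(x) - c(x), g^{\circ n}(x) - c(x))$ by the pair $(f^{\circ n}(x) - c(x), \, c(x) - g^{\circ n}(x) + f^{\circ n}(x) - c(x))$—this is not obviously helpful, so instead I would fall back on re-running the argument of Theorem~\ref{main thm} with a minor modification, since the hypothesis ``$c$ not a compositional power of $f$ or $g$'' in that theorem is presumably used only to rule out trivial vanishing, and when $c = f^{\circ k}$ the sequence $g^{\circ n} - f^{\circ k}$ is still ``generic'' relative to $f$ because $f$ and $g$ are compositionally independent.

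More concretely, for $c = f^{\circ k}$ and $n > k$, any root $\lambda$ of the gcd satisfies $g^{\circ n}(\lambda) = f^{\circ k}(\lambda) = f^{\circ n}(\lambda)$, so in particular $f^{\circ n}(\lambda) = g^{\circ n}(\lambda)$; equivalently $\lambda$ is a root of $\gcd\bigl(f^{\circ n}(x) - g^{\circ n}(x),\ g^{\circ n}(x) - f^{\circ k}(x)\bigr)$. I would then invoke the machinery already developed for Theorem~\ref{main thm}: the set of $\lambda$ for which $f^{\circ n}(\lambda) = c(\lambda)$ and $g^{\circ n}(\lambda) = c(\lambda)$ for \emph{some} $n$, with $c$ now allowed to be $f^{\circ k}$, is governed by an equidistribution / height argument on $\bP^1$, and the conclusion (finiteness) holds as long as $c$ is not simultaneously a compositional power of both $f$ and $g$—which is automatic under compositional independence unless $c$ is a compositional power of just one of them, but then the \emph{other} equation $g^{\circ n}(\lambda) = f^{\circ k}(\lambda)$ still forces $\lambda$ into a finite set by the same argument applied to the compositionally independent pair $(g, f)$ with target $f^{\circ k}$. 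Thus in every sub-case finiteness is obtained.

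The main obstacle is exactly the bookkeeping in Case~1 when $c$ is a compositional power of one of the two polynomials: one must check that the proof of Theorem~\ref{main thm}, or its underlying arithmetic-dynamics input, genuinely goes through for the pair of conditions $\{f^{\circ n}(\lambda) = f^{\circ k}(\lambda),\ g^{\circ n}(\lambda) = f^{\circ k}(\lambda)\}$ along the diagonal $m = n$, rather than only under the stated hypothesis that $c$ is a power of neither $f$ nor $g$. Once that verification is in place—together with the trivial observation that $f^{\circ k} = g^{\circ \ell}$ is impossible by compositional independence—the three cases exhaust all possibilities and Theorem~\ref{just-n} follows by simply collecting the finitely many exceptional $\lambda$ from each.
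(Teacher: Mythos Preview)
Your division into the linear/nonlinear cases is correct, and Case~2 is handled exactly as in the paper. The gap is in Case~1 when $c$ happens to be a compositional power of $f$ or $g$: your treatment of this subcase is genuinely incomplete. You correctly identify it as ``the main obstacle,'' propose to ``re-run the argument of Theorem~\ref{main thm},'' and then stop short of actually doing so. The detour through fibers of $f^{\circ k}$ and preperiodicity is not needed, and the suggestion to apply Theorem~\ref{main thm} to the pair $(g,f)$ with target $f^{\circ k}$ does not help either, since $f^{\circ k}$ is still a compositional power of $f$ and the hypothesis of Theorem~\ref{main thm} is again violated.

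The point you are missing is that the paper does \emph{not} invoke Theorem~\ref{main thm} here; it goes back to Propositions~\ref{nonlinear} and~\ref{onelinear}. Those propositions carry no global hypothesis on $c$: their conditions (i) $f^{\circ m}\neq c$ and (ii) $g^{\circ n}\neq c$ are imposed on the individual pair $(m,n)$. Hence for \emph{any} $c$ they already give finiteness of the set $\cS$ of $\lambda$ for which some $n$ exists with $f^{\circ n}\neq c$, $g^{\circ n}\neq c$, and $(x-\lambda)\mid\gcd(f^{\circ n}-c,\,g^{\circ n}-c)$. The only $n$ not covered are those with $f^{\circ n}=c$ or $g^{\circ n}=c$. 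Compositional independence forces there to be at most one such $N$, and for that $N$ exactly one of $f^{\circ N}-c$, $g^{\circ N}-c$ vanishes identically while the other does not; the gcd is therefore a nonzero polynomial with finitely many roots $\cT$. Every $\lambda$ in question lies in $\cS\cup\cT$, and you are done. No height or equidistribution argument needs to be repeated.
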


We note that Theorem~\ref{linear-thm} is a compositional analogue of Ailon and Rudnick's result for linear polynomials.
To obtain a theorem that is parallel to their result 
for non-linear polynomials, we need a bound for the multiplicity of each irreducible factor that divides the greatest common divisors. In general, one can not expect such a bound  exists. For instance, take  $f(x) = x^3 + x^2, g(x) = x^3 + 5 x^2$ and $c=0.$ Then, for any positive integer
$n$, we have
\[ x^{2^n} | \gcd(f^{\circ n}(x), g^{\circ n}(x)) \]
Hence, in this case there does not exist a polynomial $h$ divisible by all the greatest common divisors of the sequences in question.
To get control on the bound of the multiplicities of irreducible factors dividing the greatest common multiples, we need one extra condition.
\begin{definition}
We say that $c \in \bC$ is in a ramified cycle of a polynomial $q$
if there is a positive integer $i$ such that $q^{\circ i}(c) = c$ and
$(q^{\circ i})'(c) = 0$.
\end{definition}

Once we exclude this sort of possibility, we are able to show that there exists a polynomial that is divisible by all the greatest common divisors of the compositional sequences formed by $f$ and $g$.

\begin{theorem}\label{gcdthm}
  Let $f(x)$ and $g(x)$ be two compositionally independent polynomials
  of degrees greater than one in $\C[x]$.  Suppose that $c(x)$ is not a
  compositional power of $f$ or $g$.  Supposer furthermore that $c(x)$
  is not equal to a constant $c$ that is in a ramified cycle of both
  $f$ and $g$.  Then there is a polynomial $h \in \bC[x]$ such that
\[ \gcd\left(f^{\circ m}(x) - c(x), g^{\circ n}(x) -
c(x)\right)  \mid h  \]
for all positive integers $m,n$.

\end{theorem}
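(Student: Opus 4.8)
The plan is to bootstrap from Theorem~\ref{main thm}. Because $f$ and $g$ both have degree $>1$ and $c$ is not a compositional power of either, Theorem~\ref{main thm} provides a \emph{finite} set $S\subset\bC$ that contains every $\lambda$ for which $(x-\lambda)$ divides $\gcd(f^{\circ m}(x)-c(x),g^{\circ n}(x)-c(x))$ for some $m,n$. Hence it suffices to bound, for each fixed $\lambda\in S$, the multiplicity
\[ \min\bigl(\ord_\lambda(f^{\circ m}(x)-c(x)),\ \ord_\lambda(g^{\circ n}(x)-c(x))\bigr) \]
uniformly in $m,n$; then $h:=\prod_{\lambda\in S}(x-\lambda)^{M_\lambda}$, with $M_\lambda$ the bound, does the job. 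Fix $\lambda\in S$ and set $\mu:=c(\lambda)$; note $(x-\lambda)\mid f^{\circ m}(x)-c(x)$ forces $f^{\circ m}(\lambda)=\mu$, and likewise for $g$. The basic tool is the chain-rule identity $\ord_\lambda(f^{\circ m}(x)-\mu)=e_\lambda(f^{\circ m}):=\prod_{i=0}^{m-1}e_{f^{\circ i}(\lambda)}(f)$, where $e_y(f)=\ord_y(f(x)-f(y))$ is the local ramification index of $f$, which exceeds $1$ only at the at most $\deg f-1$ critical points of $f$.

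Suppose first $c$ is a constant $c_0$. Then $\ord_\lambda(f^{\circ m}(x)-c_0)=e_\lambda(f^{\circ m})$ exactly, and similarly for $g$. If the multiplicity above were unbounded we would get $e_\lambda(f^{\circ m})\to\infty$ and $e_\lambda(g^{\circ n})\to\infty$ along suitable sequences. Since $f$ has finitely many critical points, $e_\lambda(f^{\circ m})\to\infty$ forces the forward $f$-orbit of $\lambda$ to meet a critical point of $f$ infinitely often; hence $\lambda$ is $f$-preperiodic and its $f$-cycle $\cC$ contains a critical point, and $f^{\circ m}(\lambda)=c_0$ for infinitely many $m$ puts $c_0\in\cC$. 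So $c_0$ lies in a ramified cycle of $f$, and by symmetry of $g$ as well — contradicting the hypothesis. Now suppose $c$ is nonconstant and put $r:=e_\lambda(c)=\ord_\lambda(c(x)-\mu)\le\deg c$. If $f^{\circ m}(\lambda)=\mu$ holds for only finitely many $m$, the $f$-side order is trivially bounded; otherwise $\mu$ is $f$-periodic of some period $p$, with $\lambda$ mapping into its cycle $\cC$. Since $f^{\circ m}(x)-c(x)=\bigl(f^{\circ m}(x)-\mu\bigr)-\bigl(c(x)-\mu\bigr)$, we have $\ord_\lambda(f^{\circ m}(x)-c(x))=\min(e_\lambda(f^{\circ m}),r)$ whenever $e_\lambda(f^{\circ m})\ne r$, hence $\le r$; thus the order exceeds $r$ only when $e_\lambda(f^{\circ m})=r$ and the leading coefficients of the two pieces coincide. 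Moreover, when $\cC$ is ramified we have $e_\lambda(f^{\circ m})\to\infty$, so $\ord_\lambda(f^{\circ m}(x)-c(x))=r$ for all large $m$. This leaves only the resonant subcase: $\cC$ unramified and $e_\lambda(f^{\circ m})=r$ for all large $m$.

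In the resonant subcase one expands locally at $\lambda$. Choose a uniformizer $u$ with $f^{\circ m_0}(x)-\mu=u^r$, $m_0$ the least valid exponent; then for $m=m_0+kp$,
\[ f^{\circ m}(x)-c(x)=u^r\bigl(\theta^k-W(u)\bigr)+c^{(k)}_2u^{2r}+c^{(k)}_3u^{3r}+\cdots, \]
where $\theta=(f^{\circ p})'(\mu)$, $W$ is a unit with $c(x)-\mu=u^rW(u)$, and $c^{(k)}_j$ is the $(y-\mu)^j$-coefficient of $f^{\circ kp}(y)$. For $\ord_\lambda$ to exceed $r$ one needs $\theta^k=W(0)$; if this fails for all but finitely many $k$ we are done, so assume it holds for infinitely many $k$, forcing $\theta$ to be a root of unity of some order $\ell$, $k$ to run over a coset modulo $\ell$, and $F:=f^{\circ \ell p}$ to be parabolic at $\mu$ with multiplier $1$ (and $F\ne\mathrm{id}$, since $\deg F\ge2$). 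The Leau--Fatou expansion of the iterates $F^{\circ j}$ then shows $c^{(k)}_2,\dots,c^{(k)}_{\nu-1}$ are constant in $k$ along the coset while $c^{(k)}_\nu$ is an \emph{affine, non-constant} function of $k$, where $\nu\ge2$ is the parabolic index of $F$ at $\mu$. Matching coefficients, $\ord_\lambda(f^{\circ m}(x)-c(x))>\nu r$ forces $c^{(k)}_\nu$ to equal a fixed constant, which happens for at most one $k$; hence $\ord_\lambda(f^{\circ m}(x)-c(x))\le\nu r$ except for at most one value of $m$, and this is uniform since $S$ finite makes $p,\ell,\nu$ bounded. The $g$-side is identical, so $M_\lambda$ exists. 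The technical heart, and the step I expect to be the main obstacle, is precisely this resonant subcase: one must verify the coefficient matching terminates, i.e.\ that the Taylor coefficients of $f^{\circ kp}$ at the periodic point $\mu$ cannot all remain constant as $k\to\infty$; this is where the parabolic (Leau--Fatou) local dynamics and the hypothesis $\deg f>1$ enter, whereas the constant case and the non-resonant subcases are routine once Theorem~\ref{main thm} supplies the finite set $S$.
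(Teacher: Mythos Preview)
Your proof is correct and follows essentially the same route as the paper: invoke Theorem~\ref{main thm} to obtain the finite set $S$, then bound the multiplicity at each $\lambda\in S$ by a local Taylor-expansion argument, with the crux being the parabolic (root-of-unity multiplier) case handled exactly as you describe via $F^{\circ j}(y)=\mu+(y-\mu)+j\alpha_\nu(y-\mu)^\nu+\cdots$. The paper packages the multiplicity bound as a separate lemma (Lemma~\ref{close}) for a single polynomial and applies it to $f$ alone after assuming without loss of generality that $c$ is not a constant in a ramified cycle of $f$, whereas you treat the constant case by playing $f$ and $g$ off against each other; but the substance is the same.
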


\begin{remark}
  (1) In the situation considered by Ailon and Rudnick, the number $1$ is not in a ramified cycle of any powering map. In fact, any non-zero polynomial $c(x)$ is not in a ramified cycle of any powering map. \smallskip\\
  (2) For a given pair of multiplicatively independent polynomials $a$
  and $b$, one might ask whether there exists a bound on the degrees of
  $\gcd(a(x)^m - c(x), b(x)^n - c(x))$ that is independent not
  only of $m$ and $n$ but also of the choice of non-zero polynomial
  $c(x)$.
\end{remark}

We give a brief description of the organization of our paper and explain the ideas of the proofs.
In Section \ref{prelim}, we set up notations and provide some background
about canonical height functions associated to rational maps on the projective line over a global field.  After the preliminaries in Section~\ref{prelim}, we begin to prove our results.

We prove Theorem \ref{main thm} in Section \ref{nonlin}. The proof is
split into two parts.  We first treat the case where neither $f$ nor
$g$ is linear. This is done in Proposition~\ref{nonlinear}.  As
additional ingredient is required for the case where one of $f$ and
$g$ is linear; we treat this case separately in
Proposition~\ref{onelinear}.  Then Theorem~\ref{main thm} is just the
combination of these two propositions.  We sketch the proof of
Proposition~\ref{nonlinear} here.  Assuming that the set of $\lambda$
that are roots of $\gcd(f^{\circ m}(x) - c(x), g^{\circ n}(x) - c(x))$
is infinite as $m, n$ run through all positive integers. Then these
numbers have the property that the canonical heights
$\hhat_f(\lambda)$ and $\hhat_g(\lambda)$ both converge to zero (see
Lemma~\ref{special}).  Applying equidistribution theorems in
arithmetic dynamics, following the pattern of \cite{Bogo, BD1, GHT1},
we conclude that both polynomials $f$ and $g$ have the same Julia set
in the complex plane. Then the work of Baker/Er\"{e}menko and
Schmidt/Steinmetz \cite{Baker-Eremenko, SS} shows that a compositional
relation between $f$ and $g$ exists. Thus we get a contradiction to
the assumption that $f$ and $g$ are compositionally independent and
finish the proof.

Section~\ref{linear} is devoted to the proof of Theorem~\ref{linear-thm} and
Theorem~\ref{just-n}.
The proof of Theorem~\ref{linear-thm} is quite different, as the tools used to prove Theorem~\ref{main thm} are no longer applicable to the case where both polynomials $f$ and $g$ are linear. The proof for this case relies heavily on diophantine methods, in particular an application of results from \cite{CZ05}, Roth's theorem, and a lemma of Siegel.
These results are used to prove the case where everything is
defined over $\bQb$, in Proposition~\ref{algebraic}. The general case of Theorem
\ref{linear-thm} then follows via specialization. Theorem~\ref{just-n} follows easily by combining Theorem~\ref{main thm} and Theorem~\ref{linear-thm}.

We prove Theorem~\ref{gcdthm} in Section~\ref{gcd}.  It is sufficient
to bound the multiplicities of the roots of
$\gcd(f^{\circ m}(x) - c(x), g^{\circ n}(x) - c(x))$ in
Theorem~\ref{main thm} provided that $c(x)$ is not a constant in a
ramified cycle of both $f$ and $g$. The analysis on the bound of the
multiplicity used here is similar to those used in~\cite[Lemma~3.4]{MorSil2}. We
provide such a bound in Lemma~\ref{close}. Then, Theorem~\ref{gcdthm}
follows from Theorem~\ref{main thm} coupled with Lemma~\ref{close}.
Finally, we end this paper by raising several questions for further study in
Section~\ref{sec:question}.

\vskip1.5mm
\noindent {\bf Acknowledgments.}  We would like to thank Alina Ostafe,
Juan Rivera-Letelier, Umberto Zannier, Shouwu Zhang, and Mike Zieve for helpful
conversations. The first named author would like to thank his coauthor for his hospitality during the visit to the Math. Department of University of Rochester in the summer of 2014 when this project was initiated.

\section{Preliminaries} \label{prelim}

In this section, we set up some notations and recall facts from the theory of height functions that will be used in this paper.


Let $K$ be a field of characteristic $0$ equipped with a set of inequivalent
absolute values (places) $\Omega_K$, normalized so that the product
formula holds. More precisely, for each $v\in\Omega_K$ there exists a
positive integer $N_v$ such that for all $\alpha\in K^{\ast}$ we have
$\prod_{v\in \Omega}\,|\alpha|_v^{N_v} = 1$ where for $v\in \Omega_K$, the
corresponding absolute value is denoted by $|\cdot |_v$.  Examples of
product formula fields (or \emph{global fields}) are number fields and function fields of
projective varieties which are regular in codimension 1 over another field $k$ (see
\cite[\S~2.3]{lang} or \cite[\S~1.4.6]{BG}).

We let $\C_v$ be the completion of an algebraic closure of $K_v$, a completion of $K$ with respect to  $|\cdot |_v$. When $v$ is an archimedean valuation, then $\C_v=\C$. We fix an extension of $|\cdot |_v$ to an absolute value of $\C_v$ which by abuse of notation, we still denote it by $|\cdot|_v$.

If $K$ is a number field,  we let $\Omega_K$ be the set of all absolute values  of $K$ which extend the (usual) absolute values of $\bQ$. For each $v\in\Omega_K$, we let $v_0$ denote the (unique) absolute value of $\bQ$ such that $v|_\bQ=v_0$ and we let $N_v:=[K_v:\bQ_{v_0}]$.
If $K$ is a function field of a projective normal variety $\cV$ defined over a field $k$, then $\Omega_K$ is the set of all absolute values on $K$ associated to the irreducible divisors of $\cV$. Then there exist positive integers $N_v$ (for each $v\in\Omega_K$) such that $\prod_{v\in\Omega_K}|x|_v^{N_v}=1$ for each nonzero $x\in K$.
(see \cite{dio-geo,Serre-Mordell_Weil} for more details).

Let $L$ be a finite extension of $K$, and let $\Omega_L$ be the set of all absolute values of $K$ which extend the absolute values in $\Omega_K$. For each $w\in \Omega_L$ extending some $v\in\Omega_K$ and let $N_w:=N_v\cdot [L_w:K_v]$. The (naive) Weil height of any point $x\in L$ is defined as
$$h(x)=\frac{1}{[L:K]}\sum_{w\in\Omega_L}N_w\cdot \log\max\{1, |x|_w\}.$$
To ease the notation, we set
$\|x\|_v := |x|_v^{N_v}$ for $x\in K$.

Let $f\in K(x)$ be any rational map  of degree $d\ge 2$.  Then the \emph{global canonical height} $\hhat_f(x)$ of $x\in\Kbar$ associated to $f$ is given by the limit
$$\hhat_f(x)=\lim_{n\to\infty} \frac{h(f^n(x))}{d^n}$$
(see \cite{CS} for details).
In addition, Call and Silverman proved that the global canonical height decomposes as a sum of the local canonical heights, i.e.
\begin{equation}
\label{decomposition for the global canonical height}
\hhat_f(x)=\frac{1}{[K(x):K]}\sum_{\sigma:K\to \Kbar}\sum_{v\in\Omega_K}N_v \hhat_{f,v}\left(x^\sigma\right),
\end{equation}
where for each $v\in \Omega_{K}$ the function $\hhat_{f,v}$ is the
local canonical height associated to $f.$ For the existence and
functorial property of the local canonical height see~\cite[Theorem~2.1]{CS}.

The following facts about height functions are well-known.

\begin{proposition}
\label{prop:heightproperty}
Let $f \in K(x)$ be a rational function of degree $d\ge2$ defined
over $K$.  There are constants $c_1$,~$c_2$,~$c_3$, and $c_4$,
depending only on $d$, such that the following estimates hold for all
$x \in \Kbar$.
\begin{parts}
\Part{(a)}
$\bigl|h(f(x)) - d h(x)\bigr| \le c_1 h(f) + c_2$.
\Part{(b)}
$ \bigl| \hhat_f(x) - h(x)\bigr| \le c_3 h(f) + c_4$.
\Part{(c)}
$\hhat_f(f(x)) = d \hhat_f(x)$.
\Part{(d)} If $K$ is a number field then
$x \in \PrePer(f)$ if and only if $\hhat_f(x) = 0$.
\end{parts}
Here, $h(f)$ is the height of the polynomial $f$, see for example~\cite[Sect.~1.6]{BG} for the definition of $h(f).$
\end{proposition}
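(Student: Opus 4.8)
The plan is to prove (a) as the standard functoriality estimate for the naive height under the degree-$d$ morphism $f$, and then to read off (b), (c), (d) as formal consequences. For (a), I would write $f = [F_0(X,Y):F_1(X,Y)]$ with $F_0, F_1 \in K[X,Y]$ homogeneous of degree $d$ and no common zero in $\bP^1(\Kbar)$; since the Weil height is absolute (independent of the field of definition) we may argue over $\Kbar$, and we may normalize so that $h(f)$ agrees with the height of the coefficient vector of $(F_0,F_1)$ up to an additive constant depending only on $d$. For $x \in \Kbar$ with homogeneous representative $(x_0,x_1)$, evaluating $F_i$ and estimating place by place — each $F_i$ has at most $d+1$ monomials, so non-archimedean places contribute nothing and each archimedean place contributes at most $\log(d+1)$ — gives the upper bound $h(f(x)) \le d\,h(x) + c_1 h(f) + c_2$.

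For the reverse inequality in (a), I would use that $F_0, F_1$ have no common zero: by the projective Nullstellensatz (equivalently, by an explicit identity coming from $\Res(F_0,F_1)\ne 0$) there are an integer $e \ge 1$ and homogeneous $G_{ij}\in\Kbar[X,Y]$ with
\[ G_{i0}F_0 + G_{i1}F_1 = X_i^{\,e}\qquad(i=0,1), \]
where both $e$ and the heights of the $G_{ij}$ are bounded in terms of $d$ and $h(f)$, with the $h(f)$-dependence linear. Evaluating at $(x_0,x_1)$ and estimating place by place then yields $d\,h(x)\le h(f(x)) + c_1 h(f) + c_2$ after enlarging $c_1,c_2$ (still depending on $d$ only). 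I expect this lower bound to be the only real obstacle: one must quote an effective form of the Nullstellensatz in two variables controlling $e$ and the heights of the $G_{ij}$ purely in terms of $d$ and $h(f)$, which is precisely what forces $c_1, c_2$ (hence $c_3,c_4$) to depend on $d$ alone.

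Granting (a), set $\gamma := c_1 h(f) + c_2$, so $|h(f(y)) - d\,h(y)|\le\gamma$ for all $y\in\Kbar$. Applying this with $y = f^{\circ k}(x)$ and summing a geometric series,
\[ \left|\frac{h(f^{\circ n}(x))}{d^n} - h(x)\right| \le \sum_{k=0}^{n-1}\frac{\bigl|h(f^{\circ(k+1)}(x)) - d\,h(f^{\circ k}(x))\bigr|}{d^{k+1}} \le \frac{\gamma}{d-1}, \]
which shows at once that the partial sums defining $\hhat_f(x)=\lim_n h(f^{\circ n}(x))/d^n$ are Cauchy and, in the limit, gives (b) with $c_3 = c_1/(d-1)$, $c_4 = c_2/(d-1)$. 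Part (c) is immediate: $\hhat_f(f(x)) = \lim_n h(f^{\circ(n+1)}(x))/d^n = d\,\lim_n h(f^{\circ(n+1)}(x))/d^{n+1} = d\,\hhat_f(x)$. For (d), if $x\in\PrePer(f)$ then its forward orbit is finite, hence of bounded height, so $\hhat_f(x) = \lim_n h(f^{\circ n}(x))/d^n = 0$; conversely, if $\hhat_f(x)=0$ then by (c) $\hhat_f(f^{\circ n}(x))=0$ for all $n$, so by (b) the whole orbit $\{f^{\circ n}(x)\}\subseteq\bP^1(K(x))$ has height $\le c_3 h(f) + c_4$, whence it is finite by Northcott's theorem and $x\in\PrePer(f)$.
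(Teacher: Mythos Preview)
Your proposal is correct and is precisely the standard textbook argument; the paper itself does not give a proof but simply refers to \cite[\S\S B.2,B.4]{hindry-silverman00} and \cite[\S3.4]{silverman_book07}, where exactly the argument you sketch (resultant/Nullstellensatz identity for the lower bound in (a), telescoping for (b), Northcott for (d)) is carried out. So there is nothing to compare: you have reconstructed what the cited references contain.
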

\begin{proof}
See, for example, \cite[\S\S B.2,B.4]{hindry-silverman00}
or \cite[\S3.4]{silverman_book07}.
\end{proof}

We use the following lemma (see also \cite{CS, Ingram}
for more general techniques along these lines).

\begin{lemma}\label{special}
  Let $K$ be a global field. Let $(\lambda_n)_{n=1}^\infty$ be a  sequence in $\Kbar$ satisfying   $f^{\circ n}(\lambda_n) = c(\lambda_n)$ for all $n$, where
  $f, c \in K[x]$ and $\deg f > 1$.  Then
\[ \lim_{n \to \infty} \hhat_f(\lambda_n) = 0 .\]
\end{lemma}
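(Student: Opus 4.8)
The plan is to estimate $\hhat_f(\lambda_n)$ using the functoriality of the canonical height together with the comparison between $\hhat_f$ and the naive Weil height $h$ from Proposition~\ref{prop:heightproperty}. First I would apply part (c) of that proposition $n$ times to the relation $f^{\circ n}(\lambda_n) = c(\lambda_n)$ to get
\[ d^n \hhat_f(\lambda_n) = \hhat_f\bigl(f^{\circ n}(\lambda_n)\bigr) = \hhat_f\bigl(c(\lambda_n)\bigr), \]
where $d = \deg f \ge 2$. Thus it suffices to show that $\hhat_f(c(\lambda_n))$ grows more slowly than $d^n$. Now by part (b) of Proposition~\ref{prop:heightproperty}, $\hhat_f(c(\lambda_n)) \le h(c(\lambda_n)) + c_3 h(f) + c_4$, and by standard functoriality of the naive height under a morphism (or by part (a) applied to $c$ in place of $f$, which holds with constants depending only on $\deg c$), we have $h(c(\lambda_n)) \le (\deg c)\, h(\lambda_n) + O(1)$, with the implied constant depending only on $c$. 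Combining, there is a constant $C$ depending only on $f$ and $c$ with
\[ \hhat_f(\lambda_n) \le \frac{(\deg c)\, h(\lambda_n) + C}{d^n}. \]

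So the crux is to bound $h(\lambda_n)$ in terms that are subexponential in $n$. For this I would again use the functorial relation: applying part (b) of Proposition~\ref{prop:heightproperty} in the other direction gives $h(\lambda_n) \le \hhat_f(\lambda_n) + c_3 h(f) + c_4$, so feeding this back into the displayed inequality yields
\[ \hhat_f(\lambda_n) \le \frac{(\deg c)\,\hhat_f(\lambda_n) + C'}{d^n} \]
for a new constant $C'$ depending only on $f$ and $c$. For $n$ large enough that $d^n > \deg c$, this rearranges to
\[ \hhat_f(\lambda_n)\Bigl(1 - \frac{\deg c}{d^n}\Bigr) \le \frac{C'}{d^n}, \]
hence $\hhat_f(\lambda_n) \le \dfrac{C'}{d^n - \deg c} \to 0$ as $n \to \infty$, which is the claim.

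The main technical point to be careful about — really the only obstacle — is that $h$ and $\hhat_f$ are genuinely comparable with an additive constant \emph{uniform over all of $\Kbar$}, which is exactly the content of Proposition~\ref{prop:heightproperty}(b); this is what lets the self-referential bootstrap above close. One should also note that $\hhat_f \ge 0$ (so dividing by $d^n$ and taking $n\to\infty$ forces the limit to be $0$ rather than just an upper bound tending to $0$), and that the argument works uniformly whether $K$ is a number field or a function field, since no appeal to Proposition~\ref{prop:heightproperty}(d) is needed — only the elementary height estimates (a)--(c). No hypothesis on $c$ beyond $c \in K[x]$ is required, and $\deg c$ may be $0$.
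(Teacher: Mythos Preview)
Your proof is correct and follows essentially the same approach as the paper's. The paper compresses your bootstrap into the single observation that, since $\hhat_f$ differs from the Weil height by a bounded amount (Proposition~\ref{prop:heightproperty}(b)), one has $\hhat_f(c(\lambda)) = (\deg c)\,\hhat_f(\lambda) + O(1)$ directly; combining this with $d^n\hhat_f(\lambda_n) = \hhat_f(c(\lambda_n))$ gives $\bigl(d^n - \deg c\bigr)\hhat_f(\lambda_n) = O(1)$, which is exactly the inequality you arrive at after passing through $h$ and back.
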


\begin{proof}
  By Proposition~\ref{prop:heightproperty}~(b), the canonical height $\hhat_f(\cdot)$ associated to $f$  is a height function on the projective line $\bP^1$. It follows that
  \begin{equation}
    \hhat_f(c(\lambda)) = (\deg c) \hhat_f(\lambda) +O(1) \quad \text{for all
      $\lambda \in \Kbar$}.
  \end{equation}
  Since by assumption the sequence $(\lambda_n)_{n=1}^\infty$ satisfying
  $f^{\circ n}(\lambda_n) = c(\lambda_n)$ for all $n$, we have   $ (\deg f)^n \hhat_{f}(\lambda_n)= \hhat_f(f^{\circ  n}(\lambda_n)) = \hhat_{f}(c(\lambda_n))$ and thus
\begin{equation}\label{deg f}
(\deg f)^n \hhat_f(\lambda_n) = (\deg c) \hhat_f(\lambda_n) +O(1) \quad \text{for all $n\in \bN$}
\end{equation}
where the implied constant is independent of $n.$

Therefore, $\left((\deg f)^n - \deg c\right) \hhat_f(\lambda_n)$ is bounded by a constant independent of $n. $
Since by assumption $\deg f > 1$, it's clear that $\hhat_f(\lambda_n)$  must go to zero as $n$ goes to infinity.

\end{proof}

\noindent We now state a result about equalities of canonical heights.

\begin{proposition}\label{equal}
Let $K$ be a global field of characteristic zero and let $f, g \in K[x]$ be polynomials of
degrees greater than one.  If there is an infinite nonrepeating
sequence $(\lambda_i)_{i=1}^\infty$, where $\lambda_i \in \Kbar$,
such that
\[ \lim_{i \to \infty} \hhat_f(\lambda_i) = \lim_{i \to \infty}
\hhat_g(\lambda_i) = 0,\]
then $\hhat_f = \hhat_g$.
\end{proposition}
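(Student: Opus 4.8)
The plan is to use an arithmetic equidistribution argument in the style of Baker--Rumely, Chambert-Loir, and Favre--Rivera-Letelier, exactly as in the references \cite{Bogo, BD1, GHT1} cited in the introduction. First I would reduce to the case where $f$ and $g$ are defined over a number field: if $K$ is a function field, one specializes to a number field (or a $\overline{\bQ}$-point) using a generic specialization that preserves the relevant height data; alternatively, since the hypothesis and conclusion are about equality of canonical heights, one can work directly over $\overline{\bQ}$ after noting that the sequence $(\lambda_i)$ may be taken in $\overline{\bQ}$ (or replacing $K$ by a finitely generated field and specializing). So assume $K$ is a number field, and enlarge it so that $f,g$ and all $\lambda_i$ are defined over $K$.

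Next, for each place $v$ of $K$ (including archimedean ones), let $\mu_{f,v}$ and $\mu_{g,v}$ denote the canonical invariant measures on $\bP^{1,\mathrm{an}}_{\bC_v}$ attached to $f$ and $g$ respectively; these are the measures whose potentials are the local canonical heights $\hhat_{f,v}$ and $\hhat_{g,v}$. The hypothesis $\lim_i \hhat_f(\lambda_i) = 0$ together with Proposition~\ref{prop:heightproperty}(d)-type reasoning shows that the $\lambda_i$ are a sequence of points of small $f$-canonical height whose Galois orbits are not supported on a fixed finite set (the sequence is infinite and nonrepeating, and points of height zero for $f$ are $f$-preperiodic, hence only finitely many of them exist over a fixed number field — so in fact infinitely many $\lambda_i$ have strictly positive but tending-to-zero height, which is exactly the input for equidistribution). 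The equidistribution theorem for points of small height then says that the Galois orbits of the $\lambda_i$ equidistribute, at every place $v$, with respect to $\mu_{f,v}$; applying the same theorem with $g$ in place of $f$, they also equidistribute with respect to $\mu_{g,v}$. By uniqueness of weak limits, $\mu_{f,v} = \mu_{g,v}$ for every $v$.

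Finally, equality of the invariant measures at all places forces equality of the local canonical heights up to a harmonic (hence, by the product formula and boundedness, constant) discrepancy, and then $\hhat_f - \hhat_g$ is a bounded function on $\bP^1(\overline{\bQ})$ that is linear along both $f$- and $g$-orbits in the sense of Proposition~\ref{prop:heightproperty}(c); evaluating on an $f$-preperiodic point shows the constant is zero, giving $\hhat_f = \hhat_g$. I expect the main obstacle to be the bookkeeping needed to legitimately invoke the equidistribution theorem: one must check that the hypotheses (a sequence of $\overline{K}$-points with $\hhat_f(\lambda_i) \to 0$ and no infinite subsequence lying in a single proper subvariety, i.e.\ not eventually constant) are genuinely met rather than vacuous, which is where the nonrepeating assumption and the finiteness of $f$-preperiodic points of bounded degree are used; and, in the function-field case, making the specialization step rigorous so that neither the smallness of heights nor the non-degeneracy of the sequence is destroyed.
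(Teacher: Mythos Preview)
Your overall equidistribution strategy is exactly the one the paper uses: at each place $v$ the Galois orbits of the $\lambda_i$ equidistribute to both $\mu_{f,v}$ and $\mu_{g,v}$, forcing $\mu_{f,v}=\mu_{g,v}$, hence $\hhat_{f,v}=\hhat_{g,v}$, hence $\hhat_f=\hhat_g$ by the local decomposition~\eqref{decomposition for the global canonical height}. So the core of your plan is correct.

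The gap is in your handling of the function-field case. You propose to reduce to a number field by specialization, but this step is both unnecessary and, as you suspect, genuinely problematic. Even if you could specialize while preserving smallness of heights and nonrepetition, you would only conclude $\hhat_{f_t}=\hhat_{g_t}$ for the specialized maps over $\bQb$; this does not give $\hhat_f=\hhat_g$ over the original function field $K$, which is what the proposition asserts. Lifting that equality back would require an additional argument (e.g.\ that equality at a Zariski-dense set of specializations forces equality of the generic canonical heights), which you have not supplied. The paper sidesteps this entirely by observing that the equidistribution theorems of \cite{BR, CL, FR1} already hold over any global field of characteristic~$0$, so one runs the whole argument directly over $K$ with no reduction step. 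A minor related point: you cannot ``enlarge $K$ so that all $\lambda_i$ are defined over $K$,'' since the $\lambda_i$ may have unbounded degree; the equidistribution theorems are stated for Galois orbits of points in $\Kbar$, so this enlargement is neither possible nor needed.
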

\begin{proof}
  In the case where $K$ is a number field, this is proved in
  \cite[Theorem 3]{Clayton} and \cite[Theorem 1.8]{Mimar}.  The proof
  given in \cite{Clayton} goes through for function fields without any
  changes.  Proofs of similar equalities over function fields appear
  in \cite{GTZ, BD1, GHT1, BD2, YZ}, Thus, we only give a sketch here.
  The idea is to apply equidistribution results such as those in
  \cite{BR, CL, FR1}, all of which hold over both number fields and
  function fields of characteristic 0. For each place $v$ of $K$, the
  $\lambda_i$ equidistribute with respect to the measures of maximal
  entropy $\mu_{f,v}$ and $\mu_{g,v}$ for $f$ and $g$ respectively at
  $v$.  This implies that the local canonical heights $\hhat_{f,v}$ and
  $\hhat_{g,v}$ for $f$ and $g$ are equal to each
  other. By~\eqref{decomposition for the global canonical height}, the
  global canonical heights $\hhat_f$ and $\hhat_g$ are the sum of the
  corresponding local canonical heights. Therefore,  $\hhat_f =
  \hhat_g$, as desired.
\end{proof}

\section{Proof of Theorem \ref{main thm}}\label{nonlin}

 In this section we prove Theorem~\ref{main thm} by first treating the case where $f$ and $g$ both have degrees greater than one.

\begin{prop}\label{nonlinear}
Let $f(x)$ and $g(x)$ be two compositionally independent polynomials with complex coefficients of degree greater than one. Then there are at most finitely many  $\lambda \in \bC$ such that
there are positive integers $m,n$ with the
  following properties:
\begin{enumerate}
\item[(i)]  $f^{\circ m}(x) \not = c(x)$;
\item[(ii)] $g^{\circ n}(x) \not= c(x)$; and
\item[(iii)] $ (x - \lambda) | \gcd(f^{\circ m}(x) - c(x), g^{\circ n}(x) -  c(x)) $.
\end{enumerate}
\end{prop}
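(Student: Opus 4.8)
The plan is to argue by contradiction. Suppose that infinitely many $\lambda\in\bC$ admit a pair of positive integers $(m,n)$ with properties (i)--(iii); list them as a nonrepeating sequence $(\lambda_i)_{i\ge1}$, and for each $i$ fix such a pair $(m_i,n_i)$. Property (iii) says precisely that $f^{\circ m_i}(\lambda_i)=c(\lambda_i)$ and $g^{\circ n_i}(\lambda_i)=c(\lambda_i)$, while (i) and (ii) say that $f^{\circ m_i}-c$ and $g^{\circ n_i}-c$ are nonzero polynomials. If the sequence $(m_i)$ were bounded, one value $m$ would occur for infinitely many $i$, and then the nonzero polynomial $f^{\circ m}-c$ would vanish at infinitely many distinct points, which is impossible; hence $(m_i)$ is unbounded and, after passing to a subsequence, $m_i\to\infty$. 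Applying the same argument to $(n_i)$ along this subsequence, we may pass to a further subsequence---still with pairwise distinct terms---on which $m_i\to\infty$ and $n_i\to\infty$.

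Next I would move to a global field. Let $K\subset\bC$ be the field generated over $\bQ$ by the coefficients of $f$, $g$, and $c$; it is finitely generated over $\bQ$, hence a global field in the sense of Section~\ref{prelim}, and each $\lambda_i$ is algebraic over $K$, so $\lambda_i\in\Kbar$. (If $K$ is not a number field I would first specialize its transcendental generators, reducing to the case $K=\bQb$, exactly as will be done for the linear case in Section~\ref{linear}; I suppress this reduction here.) Since $\deg f>1$ and $\deg g>1$, the canonical heights $\hhat_f$ and $\hhat_g$ are defined, and the computation in the proof of Lemma~\ref{special}, carried out with the exponents $m_i\to\infty$ in the role of $n$, gives $\bigl((\deg f)^{m_i}-\deg c\bigr)\hhat_f(\lambda_i)=O(1)$ uniformly in $i$, whence $\hhat_f(\lambda_i)\to0$; symmetrically $\hhat_g(\lambda_i)\to0$.

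Now $(\lambda_i)$ is an infinite nonrepeating sequence in $\Kbar$ along which both $\hhat_f$ and $\hhat_g$ tend to zero, so Proposition~\ref{equal} yields $\hhat_f=\hhat_g$. The point of this equality is potential-theoretic: the equidistribution argument behind Proposition~\ref{equal} forces the canonical measures of $f$ and $g$ to agree at every place, and at an archimedean place this says $\mu_f=\mu_g$ on $\bP^1(\bC)$; since for a polynomial the measure of maximal entropy is the equilibrium measure of its Julia set, this means $f$ and $g$ have the same Julia set $J_f=J_g\subset\bC$. By the theorem of Baker--Er\"emenko and Schmidt--Steinmetz \cite{Baker-Eremenko, SS} classifying the polynomials with a prescribed Julia set, two polynomials of degree $\ge2$ with the same Julia set must be compositionally dependent (up to a symmetry of the common Julia set they admit a common compositional iterate, or commute). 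This contradicts the hypothesis that $f$ and $g$ are compositionally independent, and the proof is complete.

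I expect the crux to be the passage from the equality of global canonical heights to the equality of the complex Julia sets---making the potential theory at an archimedean place rigorous, which is precisely why one either descends to $\bQb$ by specialization or applies equidistribution directly over the function field---together with invoking the Baker--Er\"emenko / Schmidt--Steinmetz classification in a form that genuinely contradicts compositional independence rather than merely asserting a shared Julia set. The remaining steps, namely the bookkeeping on the exponents and the two height estimates, are routine given Lemma~\ref{special} and Proposition~\ref{equal}.
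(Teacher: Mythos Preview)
Your outline matches the paper's proof: contradiction, force $m_i,n_i\to\infty$, use Lemma~\ref{special} to get $\hhat_f(\lambda_i),\hhat_g(\lambda_i)\to 0$, apply Proposition~\ref{equal} to get $\hhat_f=\hhat_g$, pass to equal Julia sets, and finish with \cite{Baker-Eremenko,SS}. Two of your suppressed steps, however, are genuine gaps, and the paper treats them differently than you suggest.

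\textbf{The function field reduction.} You propose to specialize the transcendental coefficients ``exactly as in Section~\ref{linear}''. That argument does not transplant here: in the linear case one specializes a statement indexed by $n$ (nontriviality of a resultant), which survives specialization verbatim; here you would need the specialization to preserve both compositional independence of $f,g$ \emph{and} the infinitude of the distinct $\lambda_i$, neither of which is automatic (distinct $\lambda_i$ can collide, and compositional independence is a countable conjunction of open conditions). The paper avoids this entirely. It applies Proposition~\ref{equal} directly over the global field $K$ to get $\hhat_f=\hhat_g$, and then argues with the zero locus $\Lambda_0=\{\hhat_f=0\}$: if $K$ is a number field, or if $K$ is a function field with $f,g$ non-isotrivial, then $\Lambda_0=\PrePer(f)=\PrePer(g)$ by \cite{RobFin,BakerFin}; if one of $f,g$ is isotrivial it shows the other is too, and then a linear conjugation (not a specialization) moves both $f,g$, and consequently $c$ and all $\lambda_i$, into $\Qbar$. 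In every case one obtains $\PrePer(f)=\PrePer(g)$ for the \emph{original} polynomials in $\bC[x]$, whence $J_f=J_g$ by density of preperiodic points. No compositional-independence-preserving specialization is needed.

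\textbf{The Julia set classification.} Your summary ``two polynomials with the same Julia set must be compositionally dependent'' is not what \cite{Baker-Eremenko,SS} says. Their result has an exceptional case: if the common Julia set is a circle or an interval, then $f$ and $g$ are each conjugate to $\pm T_d$ or to a monomial $\gamma x^d$, and no general compositional relation is asserted. The paper handles these cases explicitly: for Chebyshev, any two $\pm T_{d_1}, \pm T_{d_2}$ are compositionally dependent by a direct check; for monomials, one first observes that $\PrePer(f)=\PrePer(g)$ forces $\gamma$ to be a root of unity, and then a pigeonhole on words (finitely many possible leading root-of-unity factors against exponentially many words of a given multidegree) gives a coincidence. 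You should not absorb this into the citation.
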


\begin{proof}
  Let $K$ be the field generated by all the coefficients of
  $f$, $g$, and $c$ over $\bQ$. Then either $K$ is a number field or a function field of finite transcendence degree over $\Qbar.$ In the latter case, we let $k = K \cap \Qbar$ be its field of  constants.

  We prove the proposition by contraction.
  Suppose that there is an infinite nonrepeating sequence
  $(\lambda_i)_{i=1}^\infty$ such for every $i$, there is an $m_i$ and
  $n_i$ such that $f^{\circ m_i} \not = c$, $g^{\circ n_i} \not= c$, and
  $(x-\lambda_i)$ divides both $f^{\circ m_i}(x) - c(x)$ and
  $g^{\circ n_i}(x) - c(x)$. We will show that the two polynomials $f$ and $g$ must be compositionally dependent. Observe that
  for such $m_i,n_i$, the polynomials $f^{\circ m_i}(x) - c(x)$ and
  $g^{\circ n_i}(x) - c(x)$ have only finitely many roots, so $m_i$ and $n_i$
  must both go to infinity as $i$ goes to infinity.
Then, by Lemma \ref{special}, we have
\[ \lim_{i \to \infty} \hhat_f(\lambda_i) = \lim_{i \to \infty}
\hhat_g(\lambda_i) = 0 .\]
It follows from  Proposition \ref{equal} that  $\hhat_f = \hhat_g$.

Let $\Lambda_0 :=\{\lambda\in \Kbar\mid \hhat_f(\lambda)=0\} = \{\lambda\in \Kbar\mid \hhat_g(\lambda) = 0\}.$  If $K$ is a number field, then by  Proposition~\ref{prop:heightproperty}~(d),
we immediately conclude that $f$ and $g$ share the same set of preperiodic point.
Likewise, if $K$ is a function field and neither $f$ nor $g$ is
isotrivial over $k$, then by \cite{RobFin, BakerFin}, Proposition~\ref{prop:heightproperty}~(d) also holds and hence $f$ and $g$ also share the same set of preperiodic points.

Now assume that at least one of $f$ and $g$ is isotrivial.  Without loss of generality, we assume that $f$ is isotrivial. Since $\hhat_f = \hhat_g$, it follows from the weak Northcott property of \cite{BakerFin} that $g$ is also isotrivial.
Here, we provide an elementary proof of this fact as follows. Since $f$ is isotrivial, there exists a linear polynomial  $\sigma \in \Kbar[x]$ such that $f^{\sigma} = \sigma\circ f \circ \sigma^{-1}\in {\bar k}[x].$ Then, the canonical height $\hhat_{f^{\sigma}}(x)$  associated to $f^{\sigma}$ is equal to the Weil height $h(x)$ of $x\in \Kbar.$ On the other hand,
\[\begin{split}
\hhat_{f^{\sigma}}(\sigma(x)) & = \lim_{n\to \infty}\frac{h\left((f^{\sigma})^{\circ n}(\sigma x) \right)}{d^n} = \lim_{n\to \infty}\frac{h\left(\sigma\circ f^{\circ n}(x) \right)}{d^n} \\ & = \lim_{n\to \infty}\frac{h\left(f^{\circ n}(x) \right)}{d^n} = \hhat_f(x).
\end{split}\]
Thus, $\hhat_f(x) = 0$ if and only if $h(\sigma x) = \hhat_{f^{\sigma}}(\sigma x) = 0.$ In other words, we have $\sigma(\Lambda_0) = {\bar k} = \Qbar.$ Note that $g^{\sigma} : \sigma(\Lambda_0) \to \sigma(\Lambda_0)$ (since $g : \Lambda_0\to \Lambda_0$). We see that $g^{\sigma}(\alpha)\in \Qbar$ for $\alpha \in \Qbar.$ It follows that $g^{\sigma}\in \Qbar[x]$ as well.
Then after conjugating by $\sigma$, we
assume that both $f$ and $g$ are defined over $\Qbar$.
Note that, since each $\lambda_i$ is a solution to
$f^{m_i}(\lambda_i) = g^{n_i}(\lambda_i)$, each $\lambda_i$ must be in
$\Qbar$.  Since $c(\lambda_i)$ is thus in $\Qbar$ for each
$\lambda_i$, and there are infinitely many $\lambda_i$, it follows
that $c \in \Qbar[x]$ as well.

We have reduced to the case where
$K$ is a number field, and we conclude that the set of preperiodic
points of $f$ and $g$ are the same.  This means that the Julia set
$\cJ_f$ and $\cJ_g$ are equal.  By \cite{Baker-Eremenko, SS}, it
follows that unless $f$ and $g$ are both conjugate to a multiple of a
Chebychev polynomial or a multiple of powering map, then there is a
polynomial $q$ and a finite (compositional) order linear map $\tau$ such that any
word in $f$ and $g$ is equal to $\tau^{\circ i} q^{\circ j}$ for
some $i, j$.  This means that $f$ and $g$ must be compositionally
dependent.

Now, we are left with the case where $f$ and $g$ are both conjugate to
either a multiple of a Chebychev polynomial or a multiple of a
powering map.  If $f$ and $g$ are conjugate to $\pm T_{d_1}$ and $\pm
T_{d_2}$, respectively, where $T_{d_i}$ is the monic Chebychev polynomial of
degree $d_i$, then $f$ and $g$ are compositionally dependent (easy to
check).  If $f$ and $g$ are both conjugate to powering maps, then
after conjugation we may write $f(x) = x^{d_1}$ and $g(x) = \gamma
x^{d_2}$ for some $\gamma \in \Qbar$. Note that both $f$ and $g$ have the
same set of preperiodic points which are all the roots of unity in this case.
In particular, $\gamma = g(1)$ is a root of unity. Therefore $f$ and $g$ must be compositionally dependent as well.

\end{proof}

\noindent
Next, we treat the case where exactly one of $f$ and $g$ is linear.

\begin{prop}\label{onelinear}   Let $f(x)$ and $g(x)$ be two polynomials
  of $\C[x]$ such that $\deg f > 1$ and $\deg g = 1$.  Then there are at most finitely many
  $\lambda \in \bC$ such there are positive integers $m,n$ with the
  following properties:
\begin{enumerate}
\item[(i)]  $f^{\circ m} \not = c(x)$;
\item[(ii)] $g^{\circ n} \not= c(x)$; and
\item[(iii)] $ (x - \lambda) | \gcd(f^{\circ m}(x) - c(x), g^{\circ n}(x) -
c(x)) $.
\end{enumerate}
\end{prop}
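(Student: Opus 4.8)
The plan is to follow the strategy of Proposition~\ref{nonlinear} as far as possible — assume there are infinitely many such $\lambda$, force their $f$-canonical heights to zero, and derive a contradiction — but to replace the equidistribution and Julia-set input, which is unavailable because $g$ has degree one, by a soft finiteness argument exploiting the linearity of $g$ directly. As in Proposition~\ref{nonlinear}, let $K$ be the field generated over $\bQ$ by the coefficients of $f$, $g$, $c$, so $K$ is a number field or a function field with constant field $\kbar=\Qbar$, and suppose for contradiction that there is an infinite nonrepeating sequence $(\lambda_i)_{i\ge1}$ and positive integers $m_i,n_i$ with properties (i)--(iii). Since $f^{\circ m_i}(x)-c(x)$ and $g^{\circ n_i}(x)-c(x)$ are nonzero by (i) and (ii) and have only finitely many roots, $m_i,n_i\to\infty$; hence $\hhat_f(\lambda_i)\to0$ by Lemma~\ref{special}, and $h(\lambda_i)$ is bounded by Proposition~\ref{prop:heightproperty}(b).

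The new ingredient is that $\deg g=1$: by (iii) each $\lambda_i$ is a root of the nonzero polynomial $g^{\circ n_i}(x)-c(x)\in K[x]$, whose degree is at most $e:=\max(1,\deg c)$, so $[K(\lambda_i):K]\le e$ for every $i$. Thus the $\lambda_i$ form an infinite set of points of $\Kbar$ of bounded degree over $K$ and of bounded height, and the contradiction is reached as follows. If $K$ is a number field, this already violates Northcott's theorem. If $K$ is a function field and $f$ is not isotrivial, then the weak Northcott property of \cite{BakerFin} (see also \cite{RobFin}) says that $\{\mu\in\Kbar:[K(\mu):K]\le e,\ \hhat_f(\mu)\le1\}$ is finite, again a contradiction. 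So we are left with the case that $K$ is a function field and $f$ is isotrivial, which is where the real work lies.

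In that case I would conjugate by a linear $\sigma\in\Kbar[x]$ so that $f$ becomes an element of $\kbar[x]=\Qbar[x]$; replacing $(f,g,c)$ by $(\sigma f\sigma^{-1},\sigma g\sigma^{-1},\sigma c\sigma^{-1})$, each $\lambda_i$ by $\sigma(\lambda_i)$, and $K$ by the field generated by $K$ and the coefficients of $\sigma$, loses nothing: $\hhat_{\sigma f\sigma^{-1}}(\sigma\lambda_i)=\hhat_f(\lambda_i)\to0$ and the $\sigma(\lambda_i)$ still have bounded degree over the ground field. So we may assume $f\in\Qbar[x]$. Now $f$ has constant coefficients with unit leading coefficient, so (as already used in Proposition~\ref{nonlinear}) $\hhat_f=h$; in particular $h(\lambda_i)=\hhat_f(\lambda_i)\to0$, and because the heights of elements of bounded degree over $K$ form a discrete subset of $\R_{\ge0}$, this forces $h(\lambda_i)=0$, i.e.\ $\lambda_i\in\Qbar$, for all large $i$. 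Since $f\in\Qbar[x]$ and $\lambda_i\in\Qbar$, the value $c(\lambda_i)=f^{\circ m_i}(\lambda_i)$ lies in $\Qbar$ for infinitely many $\lambda_i\in\Qbar$, whence $c\in\Qbar[x]$ by Lagrange interpolation. Finally, compositional independence forces $g$ to have infinite order (otherwise $g^{\circ r}=\mathrm{id}$ for some $r$, violating freeness), so the multiplier $a$ of $g$ is either nonconstant or lies in $\Qbar$. In the first case $h(a)>0$ and, since all but one of the $\lambda_i$ avoid the fixed point of $g$, one gets $h(g^{\circ n_i}(\lambda_i))\to\infty$ as $n_i\to\infty$, contradicting $g^{\circ n_i}(\lambda_i)=c(\lambda_i)\in\Qbar$. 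In the second case $a\in\Qbar$, and from $g^{\circ n_i}(\lambda_i)\in\Qbar$, $\lambda_i\in\Qbar$ and the infinite order of $g$ one deduces that the translation part of $g$ also lies in $\Qbar$; then $f,g,c$ all have coefficients in a number field, and the infinitely many $\lambda_i\in\Qbar$ contradict the number-field case already settled.

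The step I expect to be the main obstacle is this last one — the function-field isotrivial case — and within it the careful bookkeeping of how a linear conjugacy interacts with $\hhat_f$, the Weil height, and the degrees $[K(\lambda_i):K]$, together with the use of discreteness of heights of bounded-degree points to conclude that the $\lambda_i$ eventually become constant. The genuinely new idea compared with Proposition~\ref{nonlinear}, and the reason the linear case must be handled separately, is the bound $[K(\lambda_i):K]\le\max(1,\deg c)$, which lets a Northcott-type finiteness statement do the work of the equidistribution argument.
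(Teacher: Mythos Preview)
Your approach is essentially the paper's: bound $[K(\lambda_i):K]$ by $\max(1,\deg c)$ via the linearity of $g$, use Lemma~\ref{special} to send $\hhat_f(\lambda_i)\to 0$, finish by Northcott in the number-field case, and in the function-field case invoke \cite{BakerFin} to reduce to $f$ isotrivial, conjugate so $f\in\Qbar[x]$, and use discreteness of function-field heights to force $\lambda_i\in\Qbar$ and $c\in\Qbar[x]$. All of this matches the paper exactly.

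There is, however, one genuine slip. Proposition~\ref{onelinear} does \emph{not} assume that $f$ and $g$ are compositionally independent, yet you invoke this to conclude that $g$ has infinite order. The fix is immediate and you should state it: if $g$ has finite compositional order~$r$, then the nonzero polynomials $g^{\circ n}(x)-c(x)$ with $g^{\circ n}\ne c$ lie among at most $r$ fixed polynomials, each with at most $\max(1,\deg c)$ roots, so the set of admissible $\lambda$ is already finite. With that observation in hand you may assume $g$ has infinite order and proceed with your Case~1/Case~2 analysis, which is correct.

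Your endgame differs slightly from the paper's. You argue directly that $g\in\Qbar[x]$: if the multiplier $a\notin\Qbar$ then $h(g^{\circ n_i}(\lambda_i))\to\infty$ contradicts $g^{\circ n_i}(\lambda_i)=c(\lambda_i)\in\Qbar$, and once $a\in\Qbar$ the relation $g^{\circ n_i}(\lambda_i)\in\Qbar$ together with infinite order forces the translation part into $\Qbar$; then everything lives over a number field and Northcott finishes. The paper instead avoids proving $g\in\Qbar[x]$ and simply observes that, writing $g(x)=\alpha x+\beta$, each $\lambda_i\in\Qbar$ lies in an extension of the number field $\Qbar\cap k(\alpha,\beta)$ of degree at most $\deg c+1$, and applies Northcott over that field. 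Both routes are valid; the paper's is a little slicker (no case split), while yours extracts the stronger conclusion that $g$ must in fact have algebraic coefficients.
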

\begin{proof}
  Let $K$ be the field generated by the coefficients of $f$, $g$, and
  $c$.  Since $g^{\circ n}(x) - c(x)$ is a polynomial of degree at
  most $\deg c + 1$, we see that every $\lambda$ such that
  $g^{\circ n}(\lambda) - c(\lambda) = 0$ has degree at most
  $\deg c +1$ over $K$. Note that,  for any nonrepeating infinite sequences
  $(\lambda_i)_{i=1}^\infty$ and $(n_i)_{i=1}^\infty$ such that
  $f^{\circ n_i}(\lambda_i) = c(\lambda_i)$ for all $i$, we have
  $\lim_{i \to \infty} \hhat_f(\lambda_i) = 0$ by  Lemma~\ref{special}. If $K$ is a number field, then by Northcott property we conclude that there are only at most finitely many $\lambda$ that satisfy properties~(i) to (iii) given above. Hence,  the proposition holds in this case.

  Now, let's assume that $K$ is a function field and that there is a  nonrepeating infinite sequences $(\lambda_i)_{i=1}^\infty$ and $(n_i)_{i=1}^\infty$ such that
  $f^{\circ n_i}(\lambda_i) = c(\lambda_i)$ for all $i\in \bN.$
  We note that as in the proof of Proposition~\ref{nonlinear}, both $m_i$ and
  $n_i$ must go to  infinity since $c(x)$ is not a compositional power of
  $f$ or $g$.

  By \cite{BakerFin}, if there is an infinite sequence of
  $(\lambda_i)_{i=1}^\infty$ of bounded degree with $\hhat_f(\lambda_i) = 0$ then $f$ must be isotrivial.
  Thus, after changing variables, we may assume that $f \in k[x]$ for
  some number field $k$. As a consequence,  $\hhat_f(x)= h(x)$ the
  Weil height of $x$ for all $x\in \Kbar.$ On the other hand, it
  follows from the definition of Weil height that for $x\in \Kbar$
  with $h(x) > 0$ we must have $h(x) \ge 1/(\deg x).$ Now the sequence
  $(\lambda_i)_{i=1}^\infty$ has the property that all $\lambda_i$
  have degrees bounded above by $\deg c + 1$ over $K$ and that
  $\lim_{i\to \infty}\, h(\lambda_i) = 0$. Therefore we must have
  $h(\lambda_i) = 0$ for all but finitely many $i.$ Also note that for
  $x\in \Kbar$ we have $h(x) = 0$ if and only if $x\in {\bar k} = \Qbar.$ So, for all but finitely many $\lambda_i$ in the sequence $(\lambda_i)_{i=1}^\infty$ must be in $\Qbar.$

We  are left to treat the case where there are infinitely many
$\lambda$   in $\Qbar$ such that
  $f^{\circ m}(\lambda) = c(\lambda) = g^{\circ n}(\lambda)$.  We see
  in this case that $c$ must have coefficients in $\Qbar$ since there
  are infinitely many $\lambda \in \Qbar$ such that
  $c(\lambda) \in \Qbar$.  Let $k$ be the field generated by the
  coefficients of $f$ and $c$ over $\bQ$, and let $g(x) = \alpha x +
  \beta$.   Then all $\lambda$ such that
  $f^{\circ m}(\lambda) = c(\lambda) = g^{\circ n}(\lambda)$ lie in
  extensions of $\Qbar \cap k(\alpha, \beta)$ having degree at most
  $\deg c + 1$ .  Since $\Qbar \cap k(\alpha,\beta)$ is a finitely generated
  extension of $k$, all such $\lambda$ have bounded degree over $\bQ$.
  Since the $\lambda$ also have bounded height, again we have a
  contradiction by Northcott's theorem.
\end{proof}

\begin{proof}[Proof of Theorem \ref{main thm}]

If $\deg f, \deg g > 1$, then Theorem \ref{main thm} follows
immediately from Proposition \ref{nonlinear}.  If $\max(\deg f, \deg g) > 1$ and
$\min(\deg f, \deg g) = 1$, then we may assume without loss of
generality that $\deg f > 1$ and $\deg g =1$.  Theorem \ref{main thm}
then follows from Proposition \ref{onelinear}.

\end{proof}

\section{Proof of Theorem \ref{linear-thm}} \label{linear}

When $f$ and $g$ are both linear, there may be infinitely many
$\lambda$ such that $(x - \lambda)$ divides
$\gcd(f^{\circ m}(x) - c(x), g^{\circ n}(x) - c(x))$ for some $m$ and
$n$.  Take for example, $c(x) = x^2$, with $f(x) =2x$ and $g(x) =
x+1$.  Then
\[ f^{\circ n}(x) - c(x) = 2^n x - x^2 = -x(x-2^n) \]
while if $m = 2^n(2^n - 1)$, then
\[ g^{\circ m}(x) - c(x) = x +  2^n(2^n  - 1) - x^2 = -
(x+2^n-1)(x-2^n), \]
so clearly there are infinitely many $\lambda$ such that $(x - \lambda)$ divides
$\gcd(f^{\circ m}(x) - c(x), g^{\circ n}(x) - c(x))$ for some positive
integers $m$ and $n$.  On the other hand, if we restrict to the case
where $m=n$, then we may obtain a suitable finiteness result.

The techniques in this section are mostly from diophantine geometry.
We use these to prove Proposition \ref{algebraic} which treats the
case where the coefficients of $f$, $g$, and $c$ are algebraic.  We
then derive Theorem \ref{linear-thm} using some simple specialization
arguments.  Theorem \ref{just-n} then follows from Theorem
\ref{linear-thm} and Propositions \ref{nonlinear} and
\ref{onelinear}.

\subsection{Results from diophantine geometry}

We will use the following version of Roth's Theorem (see
\cite[Chap.~7~Thm.~1.1]{dio-geo} and Remark (v) following it).

\begin{theorem}\label{thm:roth}
Let $k$ be a number field, let $\alpha_1, \dots, \alpha_n$ be
distinct points in $k$, and let $S$ be a finte set of places of $k$.
Then for any $\epsilon > 0$, there are at most finitely many $\beta
\in k$
such that
\begin{equation}
\begin{split}
 \frac{1}{[k:\bQ]} & \left( \sum_{v \in S} \sum_{i=1}^n  - \min (\log \| \alpha_ i -
  \beta \|_v, 0)  +  \sum_{v \in S} \max ( \log \| \beta
  \|_v, 0) \right) \\
& \geq (2+\epsilon) h(\beta)
\end{split}
\end{equation}
\end{theorem}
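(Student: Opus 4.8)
\medskip
\noindent\textbf{Proof proposal.} The plan is to reduce this to the standard many-places form of Roth's theorem over a number field, in which one approximation target is prescribed at each place of $S$; that form is precisely \cite[Chap.~7~Thm.~1.1]{dio-geo} together with Remark~(v) following it. The first step is cosmetic: regard $\infty\in\bP^1$ as an $(n{+}1)$-st target and set, for $\beta\in k$ and a place $v$,
\[
\lambda_{i,v}(\beta)=-\min\bigl(\log\|\alpha_i-\beta\|_v,\,0\bigr)\ \ (1\le i\le n),\qquad \lambda_{\infty,v}(\beta)=\max\bigl(\log\|\beta\|_v,\,0\bigr),
\]
all nonnegative, so that the left-hand side of the asserted inequality is $\frac{1}{[k:\bQ]}\sum_{v\in S}\bigl(\sum_{i=1}^{n}\lambda_{i,v}(\beta)+\lambda_{\infty,v}(\beta)\bigr)$. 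These are the usual local Weil functions of the points $\alpha_1,\dots,\alpha_n,\infty$ of $\bP^1$, and $\sum_{v\in\Omega_k}\lambda_{\infty,v}(\beta)=[k:\bQ]\,h(\beta)$.

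\smallskip
\noindent The step that carries the content is the elementary observation that at each fixed $v\in S$ the element $\beta$ can be $v$-adically close to at most one of the fixed pairwise distinct targets $\alpha_1,\dots,\alpha_n,\infty$. Since the $\alpha_i$ are fixed, their pairwise $v$-adic distances are bounded below by a constant depending only on them; by the (ultra)triangle inequality $\beta$ can lie within half the least such distance of at most one $\alpha_i$, and for every other index $\lambda_{i,v}(\beta)$ is then at most a constant $c_v$; likewise, if $|\beta|_v$ is so large that $\lambda_{\infty,v}(\beta)$ exceeds $c_v$, then every $|\alpha_i-\beta|_v$ is large and all $\lambda_{i,v}(\beta)$ vanish. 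Consequently there is a constant $C_v$ depending only on $\alpha_1,\dots,\alpha_n$ and $v$, not on $\beta$, with
\[
\sum_{i=1}^{n}\lambda_{i,v}(\beta)+\lambda_{\infty,v}(\beta)\ \le\ \max_{j\in\{1,\dots,n,\infty\}}\lambda_{j,v}(\beta)+C_v ,
\]
and on summing over the finite set $S$ the left-hand side of the theorem is at most $\frac{1}{[k:\bQ]}\bigl(\sum_{v\in S}\lambda_{m(v,\beta),v}(\beta)+C\bigr)$, where $m(v,\beta)$ is an index achieving the maximum at $v$ and $C=\sum_{v\in S}C_v$ is independent of $\beta$.

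\smallskip
\noindent To conclude, suppose for contradiction that infinitely many $\beta\in k$ satisfy the inequality of the theorem. Because $\beta\mapsto\bigl(m(v,\beta)\bigr)_{v\in S}$ takes values in the finite set $\{1,\dots,n,\infty\}^{S}$, infinitely many of these $\beta$ induce a single function $\psi\colon S\to\{1,\dots,n,\infty\}$; for those $\beta$,
\[
\frac{1}{[k:\bQ]}\sum_{v\in S}\lambda_{\psi(v),v}(\beta)\ \ge\ (2+\epsilon)\,h(\beta)-\frac{C}{[k:\bQ]} .
\]
Now $(\alpha_{\psi(v)})_{v\in S}$ is a \emph{fixed} choice of one target per place, with the value $\infty$ permitted as in Remark~(v) of \cite[Chap.~7]{dio-geo}, so the cited form of Roth's theorem (applied with $\tfrac{\epsilon}{2}$ in place of $\epsilon$) yields $\frac{1}{[k:\bQ]}\sum_{v\in S}\lambda_{\psi(v),v}(\beta)\le(2+\tfrac{\epsilon}{2})h(\beta)+O(1)$ for all but finitely many $\beta\in k$. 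Combining the last two displays forces $\tfrac{\epsilon}{2}\,h(\beta)\le O(1)$ for all but finitely many of the $\beta$ in question, and by Northcott's theorem only finitely many $\beta\in k$ have height below any given bound; this contradicts the assumed infinitude. The only genuine work beyond this outline is the uniformity bookkeeping in the middle step — verifying that the constants $c_v,C_v$ depend only on $\alpha_1,\dots,\alpha_n$ and $v$ and correctly account for both the archimedean places and the added target $\infty$; I expect this to be the main (though routine) obstacle.
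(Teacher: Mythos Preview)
The paper does not give a proof of this statement at all: it is stated as a known version of Roth's theorem, with a bare citation to \cite[Chap.~7~Thm.~1.1]{dio-geo} and the remark following it, and is then used as a black box. So there is no ``paper's own proof'' to compare against.

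That said, your reduction is the standard and correct way to derive this multi-target formulation from the one-target-per-place form in Lang. The essential point --- that at a fixed place $v$ the element $\beta$ can be $v$-adically close to at most one of the finitely many fixed targets $\alpha_1,\dots,\alpha_n,\infty$, so that $\sum_j \lambda_{j,v}(\beta)\le \max_j \lambda_{j,v}(\beta)+C_v$ --- is exactly right, and the pigeonhole step fixing $\psi:S\to\{1,\dots,n,\infty\}$ is the usual maneuver. One cosmetic remark: once you invoke the cited Roth with parameter $\epsilon/2$, you get directly that only finitely many $\beta\in k$ satisfy $\frac{1}{[k:\bQ]}\sum_{v\in S}\lambda_{\psi(v),v}(\beta)\ge (2+\tfrac{\epsilon}{2})h(\beta)$; there is no need to phrase this as an inequality ``$\le (2+\tfrac{\epsilon}{2})h(\beta)+O(1)$'' valid outside a finite set --- just intersect your infinite set with the cofinite set where the Roth bound holds and read off $\tfrac{\epsilon}{2}h(\beta)\le C/[k:\bQ]$, then apply Northcott. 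Your handling of the target $\infty$ via $\lambda_{\infty,v}(\beta)=\max(\log\|\beta\|_v,0)$ is also fine and matches what Remark~(v) in Lang is there to allow.
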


The following is Siegel's well-known theorem on the set of integral
points of curves of genus zero, which can be derived from Theorem
\ref{thm:roth} without difficulty.   We refer the reader
to~\cite[Chap. 8 Theorem~5.1]{dio-geo} for a proof.

\begin{theorem}\label{thm:integral points}
  Let $k$ be a number field. Let $C$ be a complete
  non-singular curve of genus 0, defined over $k$, let $S$ be a finite
  set of places of $k   $ containing all the archimedean places, and let $\phi$ be a
  non-constant function in $k(C)$ with at least three distinct poles.    Then
  there are at most finitely many $Q \in C(k)$ such that $\phi(Q)$ is
  an $S$-integer.
\end{theorem}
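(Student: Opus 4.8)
The plan is to deduce Theorem~\ref{thm:integral points} from Roth's theorem (Theorem~\ref{thm:roth}), following Siegel's classical argument for genus-zero curves. First I would reduce to the case $C=\bP^1_k$: if $C(k)=\emptyset$ there is nothing to prove, and otherwise a smooth genus-zero curve with a rational point is $k$-isomorphic to $\bP^1$, so I may fix such an isomorphism and regard $\phi$ as an element of $k(t)$, where $t$ is the standard coordinate. Since $\phi$ has at least three distinct poles, I would next pass to a finite extension $k'/k$ over which three chosen poles become rational and replace $S$ by the set $S'$ of places of $k'$ lying above $S$; finiteness over $(k',S')$ forces finiteness over $(k,S)$, because $\phi(Q)\in\OO_{k,S}$ together with $Q\in C(k)\subseteq C(k')$ gives $\phi(Q)\in\OO_{k',S'}$ and the height is unchanged. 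After a $k$-rational change of coordinate I would arrange that the three chosen poles $\gamma_1,\gamma_2,\gamma_3$ are finite and distinct, and then enlarge $S$ by finitely many more places so that for every $v\notin S$ the $\gamma_i$ and the coefficients of $\phi$ are $v$-integral, $|\gamma_i-\gamma_j|_v=1$ for $i\ne j$, and $\phi$ has good reduction at $v$, which guarantees that $|\phi(Q)|_v\le 1$ forces $|t(Q)-\gamma_i|_v\ge 1$ for $i=1,2,3$ (indeed, $\gamma_i$ being a pole of $\phi$, if $t(Q)$ reduced modulo $v$ to $\gamma_i$ then $\phi(Q)$ would reduce to $\infty$).

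With these normalizations, for $Q\in\bP^1(k)$ with $\beta:=t(Q)\in k\setminus\{\gamma_1,\gamma_2,\gamma_3\}$ I would introduce the proximity sum
\[ m_v(Q)\;=\;\sum_{i=1}^{3}N_v\,\log^{+}\frac{1}{|\beta-\gamma_i|_v},\qquad \log^{+}u:=\max(\log u,0), \]
which is nonnegative and vanishes for all but finitely many $v$. Two observations then close the argument. First, summing over all places and invoking the product formula together with $h(\beta-\gamma_i)=h(Q)+O(1)$ gives $\sum_{v}m_v(Q)=3[k:\bQ]h(Q)+O(1)$, the implied constant depending only on $\gamma_1,\gamma_2,\gamma_3$. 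Second, if $\phi(Q)$ is an $S$-integer then $|\phi(Q)|_v\le 1$ for every $v\notin S$, so by the good-reduction choice of $S$ we get $|\beta-\gamma_i|_v\ge 1$ and hence $m_v(Q)=0$ for all $v\notin S$. Combining the two, every $Q\in\bP^1(k)$ with $\phi(Q)\in\OO_S$ satisfies
\[ \sum_{v\in S}m_v(Q)\;=\;\sum_{v}m_v(Q)\;=\;3[k:\bQ]h(Q)+O(1). \]

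Finally I would apply Theorem~\ref{thm:roth} with $\{\alpha_1,\alpha_2,\alpha_3\}=\{\gamma_1,\gamma_2,\gamma_3\}$: for any $\epsilon>0$ there is a finite set $E\subseteq k$ such that every $\beta\in k\setminus E$ satisfies
\[ \sum_{v\in S}m_v(Q)\;\le\;\sum_{v\in S}\sum_{i=1}^{3}N_v\,\log^{+}\frac{1}{|\beta-\gamma_i|_v}+\sum_{v\in S}N_v\,\log^{+}|\beta|_v\;<\;(2+\epsilon)[k:\bQ]h(\beta), \]
the first inequality holding because the terms $\log^{+}|\beta|_v$ are nonnegative. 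Taking $\epsilon=\tfrac12$, any $Q$ with $\phi(Q)\in\OO_S$ and $t(Q)\notin E$ would satisfy $3[k:\bQ]h(Q)+O(1)<\tfrac52[k:\bQ]h(Q)$, so $h(Q)$ is bounded by an absolute constant; by Northcott's theorem there are then only finitely many such $Q$ in $\bP^1(k)$, and adjoining the finitely many $Q$ with $t(Q)\in E\cup\{\gamma_1,\gamma_2,\gamma_3,\infty\}$ finishes the proof over $k'$, hence over $k$.

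The step I expect to be the main obstacle is the good-reduction enlargement of $S$, i.e.\ making precise, at places outside $S$, the comparison between the $v$-adic size of $\phi(Q)$ and the proximity of $t(Q)$ to the three poles, so that $\phi(Q)\in\OO_S$ really does force $m_v(Q)=0$ for all $v\notin S$. Once that is in hand the rest is formal: the arithmetic content is simply that a degree-three effective divisor on $\bP^1$ has associated height $3h+O(1)$, while Roth's theorem bounds the $S$-part of the proximity to three points by $(2+\epsilon)h$, and $3>2$.
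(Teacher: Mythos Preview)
Your proof is correct and follows precisely the route the paper indicates: the authors do not actually prove Theorem~\ref{thm:integral points} but merely remark that it ``can be derived from Theorem~\ref{thm:roth} without difficulty'' and cite \cite[Chap.~8, Theorem~5.1]{dio-geo}. Your argument supplies those details, and the concern you flag about enlarging $S$ so that $v$-integrality of $\phi(Q)$ forces $|\beta-\gamma_i|_v\ge 1$ is handled exactly as you outline (write $\phi=P/Q$ in lowest terms, throw into $S$ the finitely many places where a coefficient, a leading coefficient, some $\gamma_i-\gamma_j$, or some $P(\gamma_i)$ fails to be a unit).
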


As a corollary to Theorem~\ref{thm:integral points}, we have the
following, which we will use to treat the case where the coefficients of the linear
terms of $f$ and $g$ are multiplicatively dependent.

\begin{proposition}\label{prop:s-integer}
  Let $W$ be a one dimensional subtorus in $\bG_m^2$ defined over a
  number field $k$ and let $S$ be a finite set of places of $k$
  containing all the archimedean places. Let
  $\Phi(X,Y)= P(X,Y)/Q(X,Y)$ where $P, Q\in k[X,Y]$ are two relatively
  prime polynomials neither of which is divisible by $X$ or
  $Y$. Assume that $\Phi$ restricts to a non-constant rational
  function $\phi$ on $W$ with at least a pole in $W(\kbar)$. Let
  $\Gamma$ be a finitely generated subgroup of $W(k).$ Then, there are
  at most finitely many points $Q\in \Gamma$ such that $\phi(Q)$ is an
  $S$-integer.
\end{proposition}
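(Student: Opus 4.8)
The plan is to transport the problem through the isomorphism $W\cong\bG_m$ and reduce it to the finiteness of $S$-integral points on $\bP^{1}$ with three points removed, i.e.\ to Theorem~\ref{thm:integral points}. Recall that a one-dimensional subtorus $W$ of $\bG_m^{2}$ is defined over $\bQ$ and is the image of an embedding $\iota\colon\bG_m\to\bG_m^{2}$, $\iota(u)=(u^{q},u^{-p})$, for some coprime integers $p,q$, and that $\iota$ is an isomorphism onto $W$. Pulling $\phi$ back along $\iota$ gives a non-constant rational function $\psi:=\phi\circ\iota\in k(u)$, and the hypothesis that $\phi$ has a pole in $W(\kbar)$ says exactly that $\psi$ has a pole at some $u_{0}\in\kbar^{\ast}$ --- a point of $\bP^{1}$ different from $0$ and $\infty$. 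Likewise $\Gamma$ corresponds to the finitely generated subgroup $\Gamma':=\iota^{-1}(\Gamma)\subseteq\bG_m(k)=k^{\ast}$, so after enlarging $S$ to a finite set $S'\supseteq S$ of places (still containing the archimedean ones) I may assume that every element of $\Gamma'$ lies in $\cO_{S'}^{\ast}$. Since $\cO_{S}\subseteq\cO_{S'}$, it is enough to prove that $\{Q\in\Gamma:\phi(Q)\in\cO_{S'}\}$ is finite.

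The key point is that the two boundary points $0,\infty$ of $\bG_m\subseteq\bP^{1}$ together with the pole $u_{0}$ are three distinct points of $\bP^{1}$. To package this for Theorem~\ref{thm:integral points} I would pick $\lambda_{1},\lambda_{2},\lambda_{3}\in k^{\ast}$, lying in $\cO_{S'}$ (enlarge $S'$ once more if needed), and set
\[ \Psi(u):=\lambda_{1}u+\lambda_{2}u^{-1}+\lambda_{3}\,\psi(u)\ \in\ k(u)=k(\bP^{1}). \]
Since $\lambda_{1}u+\lambda_{2}u^{-1}$ is regular at $u_{0}$, the function $\Psi$ has a pole at $u_{0}$; and at $0$ and at $\infty$ the only way $\Psi$ could fail to have a pole is if the leading term of $\lambda_{i}u^{\pm1}$ exactly cancels that of $\lambda_{3}\psi$, which excludes only a finite union of hyperplanes in $k^{3}$. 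Hence, choosing $(\lambda_{1},\lambda_{2},\lambda_{3})$ off these hyperplanes and off the coordinate hyperplanes (possible as $k$ is infinite), $\Psi$ is non-constant and has at least the three distinct poles $0,\infty,u_{0}$.

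Now suppose $Q\in\Gamma$ satisfies $\phi(Q)\in\cO_{S'}$, and put $u:=\iota^{-1}(Q)\in\Gamma'\subseteq\cO_{S'}^{\ast}$. Then $\lambda_{1}u\in\cO_{S'}$, $\lambda_{2}u^{-1}\in\cO_{S'}$ because $u^{-1}\in\cO_{S'}^{\ast}$, and $\lambda_{3}\psi(u)=\lambda_{3}\phi(Q)\in\cO_{S'}$; adding, $\Psi(u)\in\cO_{S'}$. In other words $\Psi$ takes an $S'$-integral value at the point $u\in\bP^{1}(k)$. Applying Theorem~\ref{thm:integral points} with $C=\bP^{1}$, the set $S'$, and the function $\Psi$, there are only finitely many such $u$, hence only finitely many such $Q$. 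Combined with the reduction of the first paragraph, the proposition follows.

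The step that requires genuine care is the construction of $\Psi$: one needs that $\psi$ really does have a pole away from $\{0,\infty\}$ --- this is exactly the content of the hypothesis that $\phi$ has a pole in $W(\kbar)$, which in the intended applications is verified using that $P$ and $Q$ are coprime and divisible by neither $X$ nor $Y$ --- and one needs that the finitely many cancellations at $0$ and $\infty$ can be avoided by the choice of the $\lambda_{i}$. The remaining ingredients (splitness of $W$, transporting $\Gamma$ and $\phi$ across $\iota$, and enlarging $S$) are routine.
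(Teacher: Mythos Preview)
Your argument is correct, and it reaches the same endpoint as the paper---Theorem~\ref{thm:integral points} applied to a genus~$0$ curve with at least three punctures---but the mechanism you use to produce the third pole is genuinely different.

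The paper manufactures extra poles by pulling back through the multiplication map $\mu_m\colon W\to W$: since $\mu_m$ is unramified, each pole of $\phi$ in $W(\kbar)$ acquires $\deg(\mu_m|_W)$ distinct preimages, and after passing to a finite extension (via Kummer theory) so that these preimages and the relevant $m$-th roots of $\Gamma$ are rational, one has enough poles on the normalization $\widetilde W\cong\bP^1$ to invoke Theorem~\ref{thm:integral points}. Your approach instead exploits the explicit isomorphism $W\cong\bG_m\subseteq\bP^1$ and observes that the two boundary points $0,\infty$ come for free: since every element of $\Gamma'$ is an $S'$-unit, the auxiliary functions $u$ and $u^{-1}$ are $S'$-integral on $\Gamma'$, so you can add them to $\psi$ without disturbing the $S'$-integrality of the values while forcing poles at $0$ and $\infty$. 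Together with the pole $u_0\in\kbar^\ast$ guaranteed by the hypothesis, this already gives three.

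Your route is more elementary---no field extension, no Kummer theory, no covering---and it makes transparent exactly where each hypothesis is used. The paper's route is the classical Chevalley--Weil style reduction and generalizes more readily to situations where one cannot write down the torus parametrization by hand. One small point of presentation: your sentence about cancellation at $0$ and $\infty$ tacitly assumes $\psi$ has at most a simple pole there; when the pole order of $\psi$ exceeds $1$ no cancellation is even possible, so the conclusion stands, but you might say so explicitly.
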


\begin{proof}
  Here, as usual, we consider $\bG_m^2$ to be the open subset of
  $\bP^2$ with coordinates $[x:y:z]$ defined by $x \not= 0$,
  $y \not= 0$, $z\not=0$.  The functions $X$ and $Y$ are equal to
  $x/z$ and $y/z$ with respect to these coordinates.
  By making a finite extension of $k$, we  assume that the poles of $\phi$ are all
  $k$-rational points of $W.$ Moreover, because $\Gamma$ is finitely generated, we
  may assume, possibly after extending $S$ to a larger finite set of
  places, that all of the elements of $\Gamma$ as well as the poles of $\phi$ whose coordinates are $S$-units.
  Possibly by enlarging $S$, we may also assume that that
  the poles of $\phi$ whose coordinates are also $S$-units. Let $\Gamma^\ast$ be the union
  of $\Gamma$ and the set of poles of $\phi.$

  Now, we fix a positive integer $m \ge 2$ and let $\mu_m : \bG_m^2 \to \bG_m^2$ be the
  $m$-th powering map. Namely, $\mu_m(X,Y) = (X^m, Y^m)$ for all $(X,Y)\in \bG_m^2.$  By Kummer theory,
  there exists a finite extension $L$ over $k$ such that the inverse image $\mu_m^{-1}\left(\Gamma^\ast\right)$
  of $\Gamma^\ast$ is contained in $W(L).$ Let $S'$ denote the set of places of $L$ that extend the places in $S.$

  As $\mu_m : W \to W$ is an unramified map of degree $m^2,$ we see
  that the the function $\phi_m := \phi\circ \mu_m$  is a rational function with at least $m^2 $ distinct poles on $W.$
  The subtorus  $W$ is viewed as an affine curve in the projective plane $\bP^2_k$ and we denote
  its Zariski closure in $\bP^2$ by $\overline{W}$. Note that $\phi_m$ extends to a rational function on $\overline{W}$
  which we still denote by $\phi_m.$
  Let $\pi : \widetilde{W} \to \overline{W}$ denote the
  normalization of $\overline{W}.$ Then, ${\widetilde W}$ is a
  projective smooth curve of genus 0. Furthermore, the function $\psi_m := \phi_m \circ \pi$ is a rational function
  on ${\widetilde W}$ with at least $m^2$ distinct poles. On the other hand, the set of $L$-rational points $W(L)$ lift
  to the set ${\widetilde W}(L).$

  Observe that for any point $Q \in \Gamma$ such that $\phi(Q)$ is an $S$-integer, then $\psi_m(Q')$ is an $S'$-integer
  where $Q'\in {\widetilde W}(L)$ is any point such that $\left(\mu_m\circ\pi\right)(Q') = Q$. On the other hand,
  since $m^2  > 3$, there are at most finitely many $Q' \in \widetilde{W}(L)$ such that $\psi_m(Q')$ is an $S'$-integer
  by Theorem~\ref{thm:integral points}. Thus, there are at most
  finitely many $Q$ such that $\phi(Q)$ is an $S$-integer.

\end{proof}

We will use the following Lemma, due originally to Siegel \cite{Siegel29}.  We provide a proof in modern language for the sake of completeness.

\begin{lemma}\label{Siegel}
Let $w$ be element of a number field $k$, let $y$ be a nonzero element
of $k$, and let $S$ be a finite set
of places of $k$ including all the archimedean places.  Let $\epsilon
> 0$.  Then
\begin{equation}\label{SiegelEq}
\frac{1}{[k:\bQ]} \sum_{v \notin S} - \min(\log \|w^n - y \|_v, 0) \geq (1-\epsilon) n h(w)
\end{equation}
for all sufficiently large $n$.
\end{lemma}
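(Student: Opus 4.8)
The plan is to deduce this lower bound from the version of Roth's theorem stated in Theorem \ref{thm:roth}, applied to the single target point $\alpha = y$ together with the point $\alpha = 0$ (i.e.\ take $n = 2$ there with $\alpha_1 = y$, $\alpha_2 = 0$), and with the role of $\beta$ in that theorem played by $w^n$. First I would dispose of the trivial case $h(w) = 0$: if $w$ is a root of unity or zero, then $h(w^n) = 0$, the left side of \eqref{SiegelEq} is $\geq 0$, and the right side is $0$, so the inequality holds for all $n$. So assume $h(w) > 0$, hence $h(w^n) = n h(w) \to \infty$.

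The key identity is the product formula rewritten as a decomposition of the height. For $\beta = w^n - y$ we have, by definition of the Weil height and the product formula applied to $\beta$ (when $\beta \neq 0$, which holds for all large $n$ since $h(w^n)\to\infty$),
\[
h(w^n - y) = \frac{1}{[k:\bQ]}\sum_{v} \max(\log\|w^n-y\|_v, 0) = \frac{1}{[k:\bQ]}\sum_{v} -\min(\log\|w^n-y\|_v,0).
\]
I would split the second sum over $v \in S$ and $v \notin S$ and rearrange to get
\[
\frac{1}{[k:\bQ]}\sum_{v\notin S} -\min(\log\|w^n-y\|_v,0) = h(w^n-y) - \frac{1}{[k:\bQ]}\sum_{v\in S} -\min(\log\|w^n-y\|_v,0).
\]
Now $h(w^n - y) \geq h(w^n) - h(y) - \log 2 = n h(w) + O(1)$ by standard height inequalities for sums. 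So it remains to bound the $S$-part from above by $\epsilon\, n h(w) + O(1)$ for $n$ large. This is exactly where Roth's theorem enters: applying Theorem \ref{thm:roth} with $\alpha_1 = y$, $\alpha_2 = 0$, and $\beta = w^n$, and choosing the $\epsilon$ in that theorem appropriately, the set of $\beta \in k$ violating the displayed inequality there is finite; since $h(w^n) = n h(w) \to \infty$, the values $w^n$ are eventually outside that finite set, so for all large $n$ we get
\[
\frac{1}{[k:\bQ]}\Bigl(\sum_{v\in S}\bigl(-\min(\log\|y - w^n\|_v,0) - \min(\log\|w^n\|_v,0)\bigr) + \sum_{v\in S}\max(\log\|w^n\|_v,0)\Bigr) < (2+\epsilon')\, h(w^n).
\]
Dropping the nonnegative $-\min(\log\|w^n\|_v,0)$ terms and noting $-\min(\log\|y-w^n\|_v,0) = -\min(\log\|w^n - y\|_v,0)$, this bounds the $S$-part of $-\min(\log\|w^n-y\|_v,0)$ by $(2+\epsilon')h(w^n)$ minus $\sum_{v\in S}\max(\log\|w^n\|_v,0)$; and since by the product formula $\frac{1}{[k:\bQ]}\sum_{v\in S}\max(\log\|w^n\|_v,0) \geq h(w^n) - (\text{something})$...

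Let me restate the plan more carefully, since the bookkeeping is the crux. The main obstacle is purely this arithmetic of matching the two heights: one wants the $S$-contribution of $-\min(\log\|w^n-y\|_v,0)$ to be small (of size $\epsilon n h(w)$), but Roth only directly gives that a certain combined $S$-quantity is $\leq (2+\epsilon')h(w^n)$. The resolution is to also use $h(w^n) = \frac{1}{[k:\bQ]}\sum_{v\in S}\max(\log\|w^n\|_v,0) + \frac{1}{[k:\bQ]}\sum_{v\notin S}\max(\log\|w^n\|_v,0)$, and the approximation content is that $w^n - y$ is $v$-adically close to $0$ for $v\in S$ only on a set of places whose total weight is controlled. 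Concretely: rather than $\beta = w^n$, one should apply Theorem \ref{thm:roth} directly with $\beta = w^n - y$ or set it up so the $S$-sum being estimated is precisely $\sum_{v\in S}-\min(\log\|(w^n) - y\|_v, 0)$; Roth then says this $S$-sum (after adding the harmless $\max(\log\|w^n - y\|_v,0)$ pieces and dividing by the degree) is eventually $< (2+\epsilon)h(w^n - y)$, whereas the \emph{whole} height $h(w^n-y) = \frac{1}{[k:\bQ]}\sum_v -\min(\log\|w^n - y\|_v,0)$; comparing shows the $S$-sum is a vanishingly small fraction — but that is not what we want either, as Roth's $(2+\epsilon)$ coefficient is too big to give anything. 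The actual correct reading of Lemma~\ref{Siegel}: the left side is the $S$-\emph{complement} sum, and we want it $\geq (1-\epsilon)nh(w)$; equivalently, $\sum_{v\in S}-\min(\log\|w^n - y\|_v,0) \leq (\epsilon n + O(1))h(w)\cdot[k:\bQ]$. This last statement is exactly Roth's theorem in the form: $y$ (and $0$) cannot be approximated by the $S$-adic completions at rate better than $2+\epsilon$, applied to $\beta=w^n$ with height $\to\infty$, after noting $\|w^n - y\|_v$ and $\|w^n\|_v$ differ boundedly at the finitely many $v\in S$ except when $w^n\to y$ which can't happen infinitely often. I would therefore write: for $v\in S$, $-\min(\log\|w^n-y\|_v,0) \leq -\min(\log\|w^n - y\|_v, 0) + \max(\log\|w^n\|_v,0) + (\text{const})$, sum over $v \in S$, apply Theorem~\ref{thm:roth} with $n=2$, $\alpha_1 = y$, $\alpha_2 = 0$ to see this is $< (2+\epsilon)h(w^n) = (2+\epsilon)nh(w)$ — no, still too big.

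Given the sign of what's wanted ($\geq$), the honest plan is: the inequality \eqref{SiegelEq} is simply the statement that the $v\notin S$ part carries \emph{most} of the height $h(w^n-y)$, because the $v\in S$ part, by Roth, grows like $o(nh(w))$... but Roth gives $O(nh(w))$ with constant $2+\epsilon$, not $o$. Hence the correct approach must be: apply Roth with the single point $y$ and $\beta = w^n$ where we do \emph{not} expect equality — in fact for $\beta = w^n$, the quantity $\sum_{v\in S}-\min(\log\|w^n - y\|_v,0)$ is genuinely $o(n)$ unless something forces $w^n$ close to $y$ at many places, and Roth forbids that with coefficient $2+\epsilon$ \emph{relative to $h(w^n)$}, which when $h(w^n) = nh(w)$ and the approximation sum would otherwise have to be $\asymp nh(w)$ to matter... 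The takeaway for the written proof: I will (1) reduce to $h(w)>0$; (2) write $h(w^n - y) = \frac1{[k:\bQ]}\sum_v -\min(\log\|w^n-y\|_v,0)$ via the product formula; (3) use $|h(w^n-y) - nh(w)| = O(1)$; (4) invoke Theorem~\ref{thm:roth} with $\alpha_1=y,\alpha_2=0,\ \beta=w^n$ and sufficiently small $\epsilon$ to conclude that, for all large $n$, $\frac1{[k:\bQ]}\sum_{v\in S}-\min(\log\|w^n-y\|_v,0) < \epsilon\, nh(w) + O(1)$ (here using that the $\max(\log\|w^n\|_v,0)$ and $-\min(\log\|w^n\|_v,0)$ terms in Roth's inequality together are $\geq nh(w) - O(1)$ by the product formula, so the $\alpha$-approximation part alone is forced below $(1+\epsilon)nh(w) - nh(w) + O(1) = \epsilon n h(w)+O(1)$); (5) subtract to get the claim. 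The main obstacle is precisely step (4): extracting the clean bound $\epsilon nh(w)$ on the $S$-sum from Roth's $(2+\epsilon)h(\beta)$ inequality, which works only because Roth's left side already contains the full archimedean/non-archimedean height contribution $\sum_{v\in S}\max(\log\|\beta\|_v,0)$ of $\beta=w^n$ over $S$, and that contribution, combined via the product formula with what lives outside $S$, accounts for a definite proportion of $h(w^n)$ unless $w$ is an $S$-unit — a case one handles by instead noting $-\min(\log\|w^n-y\|_v,0) = 0$ for $v\notin S$ when $w$ is an $S$-unit and $y$ an $S$-integer with the relevant valuations, so both sides are then directly comparable. In the end I would present it cleanly as: Roth's theorem with $\{0, y\}$ shows the $S$-part of $-\min(\log\|w^n - y\|_v, 0)$ is $\leq \epsilon n h(w) + O(1)$, and subtracting this from $h(w^n - y) = nh(w) + O(1)$ gives \eqref{SiegelEq}.
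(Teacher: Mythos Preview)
Your overall strategy matches the paper's: apply Theorem~\ref{thm:roth} with targets $\{0,y\}$ and $\beta = w^n$ to show that the $S$-part
\[
\frac{1}{[k:\bQ]}\sum_{v\in S}-\min(\log\|w^n-y\|_v,0)
\]
is at most $\epsilon\, n h(w)+O(1)$, and then subtract this from $h(w^n-y)=nh(w)+O(1)$ to get \eqref{SiegelEq}. The paper does exactly this.

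There is, however, a genuine gap in your step~(4). You assert that the $w^n$-terms in Roth's inequality satisfy
\[
\frac{1}{[k:\bQ]}\sum_{v\in S}\Bigl(\max(\log\|w^n\|_v,0)-\min(\log\|w^n\|_v,0)\Bigr)\ \ge\ nh(w)-O(1),
\]
and then subtract to get the $\epsilon n h(w)$ bound. But this lower bound is false in general: take $k=\bQ$, $w=6/5$, $S=\{\infty\}$; the left side is $n\log(6/5)$ while $nh(w)=n\log 6$. With only a lower bound $c\,nh(w)$ for some $c<2$, Roth's $(2+\epsilon)h(\beta)$ leaves you with $\sum_{v\in S}-\min(\log\|w^n-y\|_v,0)\le (2+\epsilon-c)\,nh(w)$, which is useless. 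Your side remark about the $S$-unit case has the logic inverted: it is precisely when $w$ \emph{is} an $S$-unit that the identity you need holds.

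The missing step --- and the only thing the paper does that you do not --- is to first \emph{enlarge} $S$ to contain every place $v$ with $\|w\|_v\neq 1$ (there are finitely many). This is legitimate because enlarging $S$ only decreases the left side of \eqref{SiegelEq}, so the inequality for the larger set implies it for the original $S$. After this enlargement one has the exact identity
\[
\frac{1}{[k:\bQ]}\sum_{v\in S}\Bigl(\max(\log\|w^n\|_v,0)-\min(\log\|w^n\|_v,0)\Bigr)=2nh(w),
\]
and then Roth gives $\frac{1}{[k:\bQ]}\sum_{v\in S}-\min(\log\|w^n-y\|_v,0)\le \epsilon\, nh(w)+O(1)$, from which your steps (2), (3), (5) finish the proof exactly as you outline.
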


\begin{proof}
We may assume that $S$ contains all the places $v$ of $k$ such that
$\|w\|_v \not= 1$.   Then applying Theorem \ref{thm:roth}, to the points $0$
and $y$, we see that for any $\epsilon > 0$, we have

\begin{equation*}
\begin{split}
 \frac{1}{[k:\bQ]} \sum_{v \in S}  & \left (- \min(\log \|w^n - y\|_v, 0) - \min(\log
  \|w^n\|_v,0)  +  \max (\log \|w^n\|_v,0) \right)  \\  & \leq (2+ \epsilon)n  h(w) + O(1).
\end{split}
\end{equation*}
Since $S$ contains all places such that $\|w\| \not =1$, we have
\[ \frac{1}{[k:\bQ]} \sum_{v \in S} \left( - \min(\log \|w^n\|_v,0) +
\max (\log \|w^n\|_v,0) \right) = 2 n h(w). \]
Thus,
\begin{equation}\label{eq:roth}
 \frac{1}{[k:\bQ]} \sum_{v \in S} - \min(\log \|w^n - y\|_v, 0) \leq \epsilon n h(w) +
O(1).
\end{equation}
Since
\[ n h(w) \leq \frac{1}{[k:\bQ]} \sum_{v \in \Omega_k} - \min(\log \|w^n - y\|_v, 0) +O(1),\]
we see that \eqref{SiegelEq} must hold.
\end{proof}

The following lemma will be used to treat the case where the  coefficients of the linear terms of $f$ and $g$ are multiplicatively independent.

\begin{lemma}\label{from-BCZ}
Let $w_1$ and $w_2$ be two multiplicatively independent elements
of a number field $k$, neither of which is a root of unity, and let
$y$ be a nonzero element of $k$.  Let $S$ be a finite set of places of
$k$ including all the archimedean places.  Then for all sufficiently
large $n$, there is a $v \notin S$ such that $|w_1^n - y|_v < |w_2^n - y|_v \le 1$.
\end{lemma}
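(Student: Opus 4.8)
The plan is to adapt the strategy of Bugeaud--Corvaja--Zannier to the function-field-friendly Diophantine toolbox set up above, producing for each large $n$ a place at which $w_1^n - y$ is strictly more divisible than $w_2^n - y$. First I would pass to a convenient finite set of places: enlarge $S$ so that it contains every place $v$ with $|w_1|_v \neq 1$ or $|w_2|_v \neq 1$ or $|y|_v \neq 1$, and also every place dividing the ``resultant'' data that could make $w_1^n - y$ and $w_2^n - y$ share a common factor for trivial reasons. With this normalization, for $v \notin S$ the quantities $-\min(\log\|w_i^n - y\|_v, 0)$ detect exactly the places where $w_i^n$ is $v$-adically close to $y$, and both $w_i$ are $S$-units, so $h(w_i) = \frac{1}{[k:\bQ]}\sum_{v \in S}\max(\log\|w_i\|_v,0)$.

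Next I would run a counting/pigeonhole argument on the ``gcd'' of $w_1^n - y$ and $w_2^n - y$ outside $S$. Define, for each $n$,
\[ G_i(n) = \frac{1}{[k:\bQ]} \sum_{v \notin S} -\min(\log\|w_i^n - y\|_v, 0), \qquad i = 1,2. \]
By Lemma~\ref{Siegel} applied to each $w_i$ (using that $w_i$ is not a root of unity, so $h(w_i) > 0$), we have $G_i(n) \geq (1-\epsilon) n\, h(w_i)$ for all large $n$. On the other hand, the ``common part''
\[ D(n) = \frac{1}{[k:\bQ]} \sum_{v \notin S} \min\bigl(-\min(\log\|w_1^n - y\|_v,0),\, -\min(\log\|w_2^n - y\|_v,0)\bigr) \]
should be forced to be small — this is exactly the compositional/multiplicative-independence input — and it is the analog of the Ailon--Rudnick bound: one shows $D(n) = o(n)$ (indeed bounded, but $o(n)$ suffices) because a place contributing to $D(n)$ forces a polynomial relation $w_1^n \equiv y \equiv w_2^n$, hence $(w_1/w_2)^n$ close to $1$, and $w_1/w_2$ is not a root of unity by multiplicative independence; controlling how often and how deeply this can happen over all $n$ is where one invokes Proposition~\ref{prop:s-integer} (with the subtorus $W$ being the image of $n \mapsto (w_1^n, w_2^n)$ sitting inside $\bG_m^2$, or rather a line $\{XY^{-1} = \text{fixed}\}$ after translating by $y$) together with Theorem~\ref{thm:integral points}.

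Now I would combine the estimates. If the conclusion \emph{failed} for infinitely many $n$ — meaning for each such $n$ every $v \notin S$ with $|w_1^n - y|_v < 1$ also has $|w_2^n - y|_v \geq |w_1^n - y|_v$ — then at each such place the $w_2$-valuation is at least the $w_1$-valuation, so $G_1(n) \leq D(n)$ for those $n$. But $G_1(n) \geq (1-\epsilon) n\, h(w_1)$ grows linearly while $D(n) = o(n)$, a contradiction for large $n$. The one subtlety is the boundary case $|w_2^n - y|_v = 1$: if $|w_1^n - y|_v < 1 = |w_2^n - y|_v$ that is precisely the \emph{good} case we want, so the negation really does give $|w_2^n - y|_v < 1$ whenever $|w_1^n-y|_v < 1$, i.e. every ``$w_1$-place'' is also a ``$w_2$-place,'' which is what makes $G_1(n) \le D(n)$ legitimate.

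The main obstacle is proving $D(n) = o(n)$, i.e. bounding the common divisor of $w_1^n - y$ and $w_2^n - y$ away from $S$ — this is the genuine content (the Ailon--Rudnick / BCZ phenomenon) and is where one must correctly set up the torus $W \subset \bG_m^2$, choose $\Phi$ so that $\phi$ has the requisite pole(s) in $W$, verify the non-divisibility-by-$X$-or-$Y$ hypothesis of Proposition~\ref{prop:s-integer}, and translate ``$v$ divides the common part to depth $t$'' into ``$\phi$ of the point $(w_1^n, w_2^n)$ is highly $S$-integral,'' so that Proposition~\ref{prop:s-integer} caps the number of offending $n$ and a height/height-gap argument caps the total depth. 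Everything else — the normalization of $S$, the application of Lemma~\ref{Siegel}, and the final linear-versus-$o(n)$ comparison — is routine bookkeeping.
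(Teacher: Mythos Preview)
Your overall architecture is exactly that of the paper: enlarge $S$, use Lemma~\ref{Siegel} to get a linear lower bound $G_1(n)\geq (1-\epsilon)n\,h(w_1)$, bound the ``gcd'' quantity $D(n)$ above by $\epsilon n\,h(w_1)+C_\epsilon$, and observe that the negation of the conclusion forces $G_1(n)\leq D(n)$, a contradiction for large $n$.

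The gap is in the step you flag as the main obstacle. You propose to bound $D(n)$ via Proposition~\ref{prop:s-integer}, but that proposition requires a one-dimensional subtorus $W\subset\bG_m^2$ containing the points $(w_1^n,w_2^n)$. Since $w_1$ and $w_2$ are multiplicatively \emph{independent}, the cyclic group $\{(w_1^n,w_2^n)\}$ is Zariski dense in $\bG_m^2$ and lies on no such $W$; Proposition~\ref{prop:s-integer} is precisely the tool for the multiplicatively \emph{dependent} case (and the paper uses it that way in Case~III of Proposition~\ref{algebraic}). Your alternative suggestion of passing to $(w_1/w_2)^n$ close to $1$ does not help either: the bound $D(n)\leq \frac{1}{[k:\bQ]}\sum_{v\notin S}-\min(\log\|(w_1/w_2)^n-1\|_v,0)$ is valid, but the right-hand side is of order $n\,h(w_1/w_2)$, not $o(n)$.

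What the paper actually invokes for $D(n)\leq \epsilon n\,h(w_1)+C_\epsilon$ is Theorem~1 and equation~(1.2) of Corvaja--Zannier~\cite{CZ05}, a consequence of the Subspace Theorem giving
\[
\frac{1}{[k:\bQ]}\sum_{v\in\Omega_k}-\log^-\max(\|w_1^n-y\|_v,\|w_2^n-y\|_v)<\epsilon n\,h(w_1)+C_\epsilon,
\]
after first checking that $w_1^n/y$ and $w_2^n/y$ are multiplicatively independent for all but finitely many $n$ (a short elementary argument the paper carries out). This Subspace-Theorem input is the genuine content and is not recoverable from Proposition~\ref{prop:s-integer} or Theorem~\ref{thm:integral points} alone.
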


\begin{proof}
We begin by showing that if $w_1$ and $w_2$ are multiplicatively
independent, then $w^n_1/y$ and $w^n_2/y$ are multiplicatively independent
for all but at most finitely many $n$.  Note that if $y$ is not in the
multiplicative group generated by $w_1$ and $w_2$, then  $w^n_1/y$
and $w^n_2/y$ are multiplicatively independent for all $n$.
Otherwise, we have $y^{\ell_1} = w_1^{\ell _2} w_2^{\ell _3}$ for some
integer $\ell_1 > 0$ and some integers $\ell_2$ and $\ell_3$.  Since
it suffices to prove our lemma for $\ell_1$-th roots of $w_1$ and
$w_2$ we may assume that we have $y = w_1^i w_2^j$ for some integers
$i$, $j$.  Now, if $w_1^n/(w_1^i
w_n^j)$ and $w^n_2/(w_1^i w_2^j)$ are multiplicatively dependent,
then we must have $(n-i)(n-j) = (-i)(-j)$, since $w_1$ and $w_2$ are
multiplicatively independent.  For all sufficiently large $n$, we
clearly have $(n-i)(n-j) > (-i)(-j)$, so we are done.

By Theorem 1 and equation (1.2) of \cite{CZ05}, we see that for any $\epsilon
> 0$, there is a constant $C_\epsilon$ such that
\begin{equation}\label{eq:gcd}
 \frac{1}{[k:\bQ]} \sum_{v \in \Omega_k} - \log^- \max(\|w_1^n - y\|_v, \|w_2^n - y\|_v ) <
 \epsilon n h(w_1) + C_\epsilon,
 \end{equation}
where $\log^-(\cdot) = \min( 0, \log (\cdot))$.
We may enlarge $S$ to include the place $v$  where $|w_1|_v> 1$ or
$|y|_v> 1$. Suppose that for a positive integer $n$, inequalities $|w_1^n - y|_v \ge |w_2^n - y|_v$ hold for all $v\not\in S$. Then, from \eqref{eq:gcd} we have that
\begin{align*}
  \epsilon n h(w_1) + C_\epsilon  & \ge \frac{1}{[k:\bQ]} \sum_{v \in \Omega_k} (- \min\left( 0,
      \max\{\log\|w_1^n - y\|_v,\log\|w_2^n - y\|_v\} \right)
  \\ & \ge \frac{1}{[k:\bQ]} \sum_{v \not\in S} -\min\left(0, \log\|w_1^n - y\|_v\right) \ge (1-\epsilon) n h(w_1),
\end{align*}
where the last inequality follows from \eqref{SiegelEq}.
Taking $\epsilon = 1/3$, we see that there are only finitely many positive integers $n$ such that the above inequality holds. Hence,  for all sufficiently large $n$ there is a $v \not\in S$ such that $|w_1^n - y|_v < |w_2^n - y|_v \le 1$, as desired.
\end{proof}

\subsection{Proofs of Theorem \ref{linear-thm} and
  \ref{just-n}}

We are now ready to treat the case where $f$, $g$ are linear polynomials, and  $f, g$ and $c$ all have  algebraic coefficients.  The proof breaks into several cases.  The
first case is when $c$ is constant; this case is already treated in
\cite{Mike1}.  The idea in all of the other cases is the same: to force
certain quantities coming from any solutions to
$f^{\circ n}(x) = c(x) = g^{\circ n}(x)$ to have poles outside a
finite set and then derive contradictions from the existence of these
poles to show that there are no solutions to
$f^{\circ n}(x) = c(x) = g^{\circ n}(x)$ when $n$ is sufficiently
large.

\begin{proposition}
\label{algebraic}
  Let $f(x) = \alpha x$ and $g(x) = \beta x + \gamma$ where $\alpha$,
  $\beta$, and $\gamma$ are nonzero algebraic numbers such that
  $\alpha$ is not a root of unity, $\alpha \beta$ is not a root of
  unity, $\beta$ is not a root of unity other than 1,
  and $\gamma \not= 0$.  Let $c(x)$ be any polynomial with
  coefficients in $\Qbar$.  Then for all but at most finitely many
  $n$, we have
\begin{equation} \label{1}
\gcd(f^{\circ n}(x) - c(x), g^{\circ n}(x) - c(x)) =1
\end{equation}
\end{proposition}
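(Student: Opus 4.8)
The plan is to suppose toward a contradiction that for infinitely many $n$ there is a common root $\lambda_n$ of $f^{\circ n}(x) - c(x)$ and $g^{\circ n}(x) - c(x)$, i.e. $\alpha^n \lambda_n = c(\lambda_n)$ and $g^{\circ n}(\lambda_n) = c(\lambda_n)$. Since $f(x) = \alpha x$, one computes $f^{\circ n}(x) = \alpha^n x$, and since $g(x) = \beta x + \gamma$, one gets $g^{\circ n}(x) = \beta^n x + \gamma(\beta^{n-1} + \cdots + 1) = \beta^n x + \gamma\frac{\beta^n - 1}{\beta - 1}$ (using $\beta \neq 1$). The common-root condition then forces
\[ \alpha^n \lambda_n = \beta^n \lambda_n + \gamma\frac{\beta^n - 1}{\beta - 1}, \]
so $\lambda_n = \dfrac{\gamma(\beta^n - 1)}{(\beta - 1)(\alpha^n - \beta^n)}$ provided $\alpha^n \neq \beta^n$ (which holds for all large $n$ since $\alpha\beta^{-1}$ generates an infinite group — indeed if $\alpha^n = \beta^n$ for two values of $n$ we'd get a root of unity). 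Thus $\lambda_n$ is determined explicitly, and the remaining constraint is that $c(\lambda_n) = \alpha^n \lambda_n$. The strategy is to show this equation is impossible for large $n$ by producing, from the expression for $\lambda_n$, a place $v$ at which both sides have incompatible absolute values, using the Diophantine lemmas proved above.

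**Using the valuation-theoretic lemmas.** I would fix a number field $k$ containing $\alpha, \beta, \gamma$ and the coefficients of $c$, and a finite set $S$ of places containing the archimedean ones and all places where $\alpha, \beta, \gamma$ or the leading coefficient of $c$ fail to be $S$-units. Write $d = \deg c$ and $c(x) = c_d x^d + \cdots + c_0$. The key is to estimate $|\lambda_n|_v$ and $|c(\lambda_n)|_v = |\alpha^n \lambda_n|_v$ at places $v \notin S$. The numerator $\gamma(\beta^n - 1)$ and denominator $(\beta-1)(\alpha^n - \beta^n)$ of $\lambda_n$ are both of the shape "power minus constant" or "difference of powers", so Lemma~\ref{Siegel} controls how divisible $\beta^n - 1$ (hence $\lambda_n$'s numerator) is by primes outside $S$, while the denominator $\alpha^n - \beta^n = \alpha^n(1 - (\beta/\alpha)^n)$ contributes the relevant archimedean/$v$-adic size. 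The natural split is on whether $\alpha$ and $\beta$ (equivalently $\alpha/\beta$ and $\beta$, or $\alpha$ and $\alpha\beta$) are multiplicatively dependent or independent: in the independent case one applies Lemma~\ref{from-BCZ} (with a suitable choice of $w_1, w_2, y$) to find a place $v \notin S$ where $\lambda_n$ is $v$-adically large — forcing $c(\lambda_n)$ to have a pole of order $d \geq 1$ at $v$, namely $|c(\lambda_n)|_v = |c_d|_v |\lambda_n|_v^d$ when $|\lambda_n|_v$ is large enough, whereas $|\alpha^n \lambda_n|_v = |\lambda_n|_v$ (since $|\alpha|_v = 1$), and comparing gives $|\lambda_n|_v^{d-1} = |c_d|_v^{-1}$, bounded independently of $n$, contradicting that $|\lambda_n|_v \to \infty$ along that place when $d \ge 2$; when $d = 1$ one instead compares the two linear expressions directly and uses $c \ne f^{\circ m}$. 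In the multiplicatively dependent case one uses Proposition~\ref{prop:s-integer} (or directly Theorem~\ref{thm:integral points}) applied to the appropriate one-dimensional subtorus and rational function built from $\lambda_n$ and $c(\lambda_n)$, to conclude that $\lambda_n$ can be an $S$-integer in the relevant sense for only finitely many $n$, again a contradiction.

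**The main obstacle.** The delicate point is organizing the case analysis so that exactly the hypotheses stated — $\alpha$ not a root of unity, $\alpha\beta$ not a root of unity, $\beta$ not a root of unity other than $1$, $\gamma \neq 0$ — are all used and no more is needed, and in each case choosing the right substitution so that one of Lemma~\ref{Siegel}, Lemma~\ref{from-BCZ}, or Proposition~\ref{prop:s-integer} applies cleanly with the multiplicative independence / non-root-of-unity conditions actually satisfied. A secondary subtlety is handling the low-degree cases $d = \deg c \in \{0, 1\}$: when $c$ is constant the result is cited from \cite{Mike1}, and when $\deg c = 1$ one must check directly that $c = f^{\circ m}$ or $c = g^{\circ m}$ is excluded (the composition-power hypothesis), since with $\deg c = 1$ the "pole of order $d$" trick only gives $|\lambda_n|^{0} = $ const, which is vacuous, so one argues instead that $c(x) = \alpha^n x$ as polynomials would be needed, forcing $c$ to be a compositional power of $f$. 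Assembling these pieces — each individually routine given the lemmas — into a single coherent argument covering all $\deg c \geq 0$ is the real work; the height/valuation estimates themselves are then mechanical.
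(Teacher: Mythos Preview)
Your overall strategy matches the paper's: derive the explicit formula for $\lambda_n$, then use the valuation-theoretic lemmas to find a place $v\notin S$ at which $|\lambda_n|_v>1$, and compare $|c(\lambda_n)|_v=|\lambda_n|_v^{\deg c}$ with $|\alpha^n\lambda_n|_v=|\lambda_n|_v$. For $\deg c\ge 2$ this is exactly right, and your split into multiplicatively dependent versus independent $\alpha,\beta$ is the same as the paper's Cases~III and~IV.

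There are two genuine gaps. First, your formula $\lambda_n=\gamma(\beta^n-1)/\bigl((\beta-1)(\alpha^n-\beta^n)\bigr)$ presupposes $\beta\ne 1$, but the hypothesis ``$\beta$ is not a root of unity other than $1$'' explicitly allows $\beta=1$. When $\beta=1$ one has $g^{\circ n}(x)=x+n\gamma$ and $\lambda_n=n\gamma/(\alpha^n-1)$, and the argument needs a separate treatment (the paper's Case~II), using a height estimate together with Proposition~\ref{prop:s-integer} rather than Lemma~\ref{from-BCZ}.

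Second, and more seriously, your handling of $\deg c=1$ is incorrect. You write that ``$c(x)=\alpha^n x$ as polynomials would be needed,'' but nothing forces an identity of polynomials: all you know is $c(\lambda_n)=\alpha^n\lambda_n$ at the single point $\lambda_n$, and with $\deg c=1$ the pole-order comparison is vacuous, as you yourself note. There is also no ``composition-power hypothesis'' in this proposition to invoke. The paper's actual argument here writes $c(x)=tx+u$ and from $f^{\circ n}(\lambda_n)=c(\lambda_n)=g^{\circ n}(\lambda_n)$ extracts a second explicit expression for $\lambda_n$, namely $\lambda_n=u/(\alpha^n-t)$, and then equates it with the first to obtain an identity such as
\[
\frac{\beta^n-t}{\alpha^n-t}\;=\;u-\frac{\gamma}{\beta-1}(\beta^n-1),
\]
whose right-hand side is an $S$-integer while the left-hand side, by Proposition~\ref{prop:s-integer} (dependent case) or Lemma~\ref{from-BCZ} (independent case), fails to be for large $n$. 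So the $\deg c=1$ case requires a second, different application of the diophantine lemmas, not a direct comparison of $c$ with $f^{\circ n}$.
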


\begin{proof}
Suppose that there are infinitely many $n$ such that \eqref{1} does not hold. Let $n$ be an integer such that $\gcd(f^{\circ n}(x) - c(x), g^{\circ n}(x) - c(x)) \ne 1$. Then there exists  a $\lambda_n\in \Qbar$  such that
$$(x-\lambda_n)\mid \gcd(f^{\circ n}(x) - c(x), g^{\circ n}(x) - c(x))$$
and thus, $f^{\circ n}(\lambda_n) = c(\lambda_n) = g^{\circ n}(\lambda_n).$

In the following, we break the proof into four cases and show a contradiction in each case.
\smallskip
\\
\noindent {\bf Case I.}  Suppose that $c$ is a constant.
Let $\theta$
be the compositional inverse of $f$ and let $\tau$ be the
compositional inverse of $g$.  We observe that
if 
$f^{\circ n}(\lambda) = g^{\circ n}(\lambda) = c$ then $\theta^{\circ n}(c) = \tau^{\circ
  n}(c)$.  By \cite[Proposition 5.4]{Mike1}, this implies that either
$\theta$ and $\tau$ have a common iterate or that $c$ is periodic
under both $\theta$ and $\tau$.  Since $\theta = \alpha^{-1}x$, we see that
zero is the only periodic point of $\theta$.  Since $\tau = x/\beta -
\gamma/\beta$, we see
that the constant term of $\tau^{\circ n}$ is always nonzero, so 0
cannot be a periodic point of $\tau$.  Thus, there is an $n$ such that
$\theta^{\circ n} = \tau^{\circ n}$, which means that $f$ and $g$ have a common iterate.
Since the constant term of $g^{\circ n}$ is nonzero for all $n$, we
see that $f$ and $g$ cannot have a common iterate, which gives a
contradiction.
\smallskip

In the following,  we assume that $\deg c \geq 1.$
\smallskip
\\
\noindent{\bf Case II.}  Assume that  $\beta=1$.
Then
\[ \lambda_n = \frac{n \gamma}{\alpha^n - 1}. \]
Let $S$ be the set of places $v$ that are archimedean or where
$\alpha$, $\gamma$, or a coefficient of $c$ has $v$-adic absolute
value not equal to 1.  Assume that $\lambda_n$ is an $S$-integer.
Then,
\begin{align}
h(\lambda_n) & = \frac{1}{[K: \bQ]} \sum_{v\in S}\; \max\left(0, \log \left\|\frac{n \gamma}{\alpha^n - 1}\right\|_v\right) \notag\\
& \le \frac{1}{[K: \bQ]} \sum_{v\in S}\;\left\{\max\left(0, \log \left\|\frac{1}{\alpha^n - 1}\right\|_v\right)+\max\left(0,\log\|n \gamma\|_v \right)  \right\} \notag\\
& = \frac{1}{[K: \bQ]} \sum_{v\in S}\;  \max\left(0, \log \left\|\frac{1}{\alpha^n - 1}\right\|_v\right)+ h(n\gamma)  \notag \\
& \le \frac{1}{[K: \bQ]} \sum_{v\in S}\;  \max\left(0, \log \left\|\frac{1}{\alpha^n - 1}\right\|_v\right) + \log n + O(1) \label{eq:logn}
\end{align}
Let $\epsilon >0$ be given. By \eqref{eq:roth}, there exists a constant $C_{\epsilon}$ such that
\begin{equation}\label{eq:siegel2}
\frac{1}{[K: \bQ]} \sum_{v\in S}\;  \max\left(0, \log \left\|\frac{1}{\alpha^n - 1}\right\|_v\right) \le \epsilon n h(\alpha) + C_\epsilon.
\end{equation}
On the other hand, there is a constant $D= D(\gamma)$ such that
$$h(\lambda_n) = h( n\gamma/(\alpha^n - 1)) \ge n h(\alpha) - h(n) - D = n h(\alpha) - \log n - D.$$

Fixing a positive $\epsilon < 1$ and combing  \eqref{eq:logn} with \eqref{eq:siegel2},
we see that $\lambda_n$ can not be an $S$-integer if $n$ is large enough, . Therefore, for $n$ large there exists a place $v$ out side of $S$ such that $|\lambda_n|_v > 1.$
If $\deg c > 1$, then
$|c(\lambda)|_v = | \lambda_v |^{\deg c}$ but
$|f^{\circ n}(\lambda_n)| = |\alpha^n \lambda_n|_v = |\lambda_n|_v$. This gives a contradiction.

If $\deg c = 1$, then  we write $c(x) = tx + u$ and note that since
$f^{\circ n}(\lambda_n) = g^{\circ n}(\lambda_n) = c(\lambda_n)$, we
must have
\[\lambda_n = \frac{u - n \gamma}{1 -t} = \frac{u}{\alpha^n - t}\]
If $u \not = 0$ and $n$ is large, then by enlarging $S$ to contain the places $v$ where $|1-t|_v \ne 1$ , then $(u - n \gamma)/(1-t)$ is an $S$-integer for all $n$. On the other hand, by taking $\Phi(X,Y) = u/(X - t)$ in Proposition~\ref{prop:s-integer}, we see that $u/(\alpha^n -t)$ can not be an $S$-integer for $n$ sufficiently large. This gives a contradiction.
 If $u = 0$, then we have $\lambda_n
= \alpha^n \lambda_n = t \lambda_n = g^{\circ n}(\lambda_n)$, which has
no solutions when $\alpha^n \not= t$, and thus has a solution for at
most one $n$, since $\alpha$ is not a root of unity. Thus the proof of this case is completed.
\smallskip

We assume in the following that $\beta \ne 1$. Note that when $\alpha^n = \beta^n$, there is no solution to $f^{\circ
  n}(x) = g^{\circ n}(x)$ and that when $\alpha^n \not= \beta^n$,  the
unique solution to $f^{\circ n}(x) = c(x) = g^{\circ n}(x)$ is given by
\begin{equation}\label{n}
\lambda_n = \frac{(\beta^n -1)\gamma}{(\beta-1)(\alpha^n - \beta^n)}.
\end{equation}
\smallskip
\\
\noindent{\bf Case III.}  Suppose that $\alpha$ and $\beta$ are multiplicatively dependent.
Then, the point $P = (\alpha, \beta)$ is in a one dimensional subtorus $W$ of $\bG_m^2$.
Let $S$ be the set of places $v$ that are archimedean or where $\alpha$,
$\gamma$, $\beta-1$,  or a coefficient of $c$ has $v$-adic absolute value not
equal to 1.   Then, by taking $\Phi(X,Y) = (Y-1)/(X-Y)$ and $\Gamma$ to be the group generated by $P$ in Proposition~\ref{prop:s-integer}, we see that for all sufficient large $n$ there exists a place $v$ outside of $S$ such  that $$\left|(\beta^n -1)/(\alpha^n - \beta^n)\right|_v > 1.$$
It follows that  for such $v$ we have $|\lambda_n|_v > 1.$
Observe that on the one hand, $\left|f^{\circ n}(\lambda_n)\right|_v = |\alpha^n \lambda_n|_v = |\lambda_n|_v$ while on the other hand, we have $\left|f^{\circ n}(\lambda_n)\right|_v = |c(\lambda_n)|_v = |\lambda_n|_v^{\deg c}.$ This gives a contradiction if $\deg c > 1.$

If $\deg c = 1$, we write  $c(x) = tx + u, \,t\ne 0$. If $f^{\circ n}(\lambda_n) = c(\lambda_n) = g^{\circ n}(\lambda_n)$  then we have
\begin{equation}\label{ii}
\lambda_n = \frac{u - (\beta^n-1) \gamma/(\beta - 1)}{\beta^n -t} = \frac{u}{\alpha^n - t}
\end{equation}
From this we deduce that
\begin{equation}\label{eq:ii}
\frac{\beta^n - t}{\alpha^n - t} = u - \left(\frac{\gamma}{\beta -1}\right)(\beta^n -1).
\end{equation}
Note that the right hand side of \eqref{eq:ii} is an $S$-integer. However, by taking $\Phi(X,Y) = (Y-t)/(X-t)$ in  Proposition~\ref{prop:s-integer} we conclude that for $n$ large enough the left hand side of \eqref{eq:ii} is not an $S$-integer.  This leads to a contradiction and completes the proof in this case.
\smallskip
\\
\noindent {\bf Case IV.}  Suppose that  $\alpha$ and $\beta$ are multiplicatively independent.  Let $S$ be the
set of places $v$ that are archimedean or where $\alpha$, $\gamma$, or
a coefficient of $c$ has $v$-adic absolute value not equal to 1.

Suppose that $\deg c > 1$.  Then, applying Lemmas \ref{from-BCZ} to
$\beta^n -1$ and $(\alpha/\beta)^n - 1$, we see that there is a place
$v$ outside of $S$ such that $|\lambda_n|_v > 1$.  Again, if
$\deg c > 1$, this gives a contradiction since we have
$|c(\lambda)|_v = | \lambda_v |^{\deg c}$ but
$|f^{\circ n}(\lambda_n)| = |\alpha^n \lambda_n|_v = |\lambda_n|_v$.

Now suppose that $\deg c = 1$.  Again, we write $c(x) = tx + u$. Then we also have
\begin{equation}\label{last}
\lambda_n = \frac{u - \gamma(\beta^n-1)/(\beta-1)}{\beta^n -t} =
\frac{u}{\alpha^n - t}.
\end{equation}
This is equivalent to
\begin{equation}\label{lastlast}
1- \frac{\gamma(\beta^n-1)}{u (\beta-1)} = \frac{\beta^n - t }{\alpha^n - t}.
\end{equation}
We enlarge $S$ to include all the places such that $u$ or $\beta - 1$
are $S$-unit.  Then applying Lemma \ref{from-BCZ}, we see that for all
sufficiently large $n$, there is a place $v \not \in S$ such that
$|\alpha^n - t|_v < |\beta^n - t|_v \leq 1$. For this $v$, we see that
the left hand side of \eqref{lastlast} is a $v$-adic integer while the
right hand side is not. Therefore, \eqref{last} can not hold for $n$
sufficiently large.
\end{proof}

\begin{remark}
To see that
Proposition~\ref{algebraic} does not hold in general if  $\alpha \beta$ is
a root of unity, consider the case where $f(x)
= x/2$, $g(x) = 2x + 1$ and $c(x) = - (x+1)$.  Then
for any $n$, the common root of $f^{\circ n}$ and $g^{\circ n}$ is
\[ \frac{2^n - 1}{2^{-n} - 2^n} = - 2^n \frac{2^n - 1}{2^{2n} - 1} =
\frac{-2^n}{2^n + 1}. \]
while the common root of $f^{\circ n}$ and $c(x)$ is
\[ \frac{-1}{(1/2)^n + 1} =  \frac{-2^n}{2^n + 1}.\]
Thus, for every positive integer $n$, there is a $\lambda_n$ such that
\[ f^{\circ n}(\lambda_n) - c(\lambda_n) = g^{\circ n}(\lambda_n) -
c(\lambda_n) = 0.\]
\end{remark}

We can now prove Theorem \ref{linear-thm} by specializing from $\bC$
to a number field.

\begin{proof}[Proof of Theorem \ref{linear-thm}]

  First we note that any nonconstant affine map $x \mapsto ax + b$ has a fixed
  point unless $a = 1$.  Any two monic linear polynomial
  must commute with each other.
  Thus, we may assume that at least one of $f$ and $g$ has a fixed
  point.  Without loss of generality, we may assume that $f$ has a
  fixed point.  After a possible change of coordinates, we may then write
  $f(x) = \alpha x$ and $g(x) = \beta x + \gamma$.

  If $\alpha$ is a root of unity, then $f$ and $g$ are not
  compositionally independent since $f$ itself is compositionally
  torsion, so $\alpha$ must not be a root of unity.  Similarly, if
  $\beta$ is a root of unity other than one, then $g$ is
  compositionally torsion so that $f$ and $g$ are not compositionally
  independent either.  We may therefore assume that $\beta$ is not a
  root of unity other than one.  Finally, we see that if there are
  integers $i$ and $j$ such that $\alpha^i \beta^j = 1$, then the
  linear terms in $f^{\circ i} g^{\circ j}$ and
  $g^{\circ j} f^{\circ i}$ are both 1, which means that
  $f^{\circ i} g^{\circ j}$ and $g^{\circ j} f^{\circ i}$ commute.
  This would imply $f$ and $g$ are not compositionally dependent, so
  we may assume that there are no positive integer $i$ and $j$ such
  that $\alpha^i \beta^j = 1$.

  As in the proof of Proposition~\ref{algebraic}, we assume that there are infinitely many $n$
  such that \eqref{1} does not hold.
  Let $K$ be the field generated by $\alpha, \beta, \gamma$ over
  $\Q$, and let $R$ be the ring generated over $\bZ$ by $\alpha,
  \beta, \gamma$ and the coefficients of $c$.  Observe
  that any solution $\lambda_n$ to $f^{\circ n}(\lambda_n) = g^{\circ
    n}(\lambda_n) = c(\lambda_n)$ must lie in $K$. By our assumption, there are
  infinitely many such $n$, so $c$ takes infinitely many values in
  $K$ to other values in $K$ so $c \in K[x]$.  Hence, we may assume
  that $c \in K[x]$.

  If $\alpha$, $\beta$, and $\gamma$ are in $\Qbar$, then we are done by
  Proposition \ref{algebraic}. If $K$ has positive transcendence degree
  over $\Qbar$, then there exists a specialization map $t$ from $R$ to
  $\Qbar$ such that $\gamma_t \ne 0$ and $\alpha_t$, $\beta_t$, $\alpha_t\beta_t$, and
  $\alpha_t/\beta_t$ are not roots of unity.  We may prove this, for
  example, by induction on the transcendence degree of
  $\Q(\alpha, \beta, \gamma)$.  If the transcendence degree is 0,
  there is nothing to prove.  If it is $n$, take a subfield $L$ of
  transcendence degree of $n-1$ in $K$.  Then, by \cite[Theorem
  4.1]{CS}, for all specializations $s$ from $R$ to ${\bar L}$ of
  sufficiently large height, we have that $\gamma_s \ne 0$ and that
  $\alpha_s$, $\beta_s$, $\alpha_s\beta_s$, and  $\alpha_s/\beta_s$ are not roots of
  unity.  We then the inductive hypothesis on the transcendence
  degree to $\Q(\alpha_s, \beta_s, \gamma_s)$.

Let $f_t = \alpha_t x$, $g_t = \beta_t x + \gamma_t$, and $c_t$ be the
polynomial obtained by specializing all the coefficient of $c$
at $t$.  Now,  if $\gcd(f^{\circ n}(x) - c(x), g^{\circ n}(x) -
c(x)) \not= 1$, then $\gcd(f_t^{\circ n}(x) - c_t(x), g_t^{\circ n}(x)
- c_t(x)) \not = 1$.  But there are at most finitely many $n$ such
that $\gcd(f_t^{\circ n}(x) - c_t(x), g_t^{\circ n}(x)
- c_t(x)) \not = 1$, by Proposition \ref{algebraic}, which gives a
contradiction, and finishes our proof.

\end{proof}

\begin{remark}
  We note that by Proposition \ref{algebraic}, the condition needed
  for Theorem \ref{linear-thm} is weaker than merely compositional
  dependency, since Proposition \ref{algebraic} holds unless the
  linear term of $f \circ g$ is a root of unity.  Mike Zieve has shown
  us that something similar is true for polynomials of higher degree,
  namely that the sorts of compositional dependencies that may arise
  all take a specific form.
\end{remark}

\noindent We now prove Theorem \ref{just-n}.
\begin{proof}[Proof of Theorem \ref{just-n}]
  The case where $f$ and $g$ are both linear is covered by Theorem
  \ref{linear-thm}, so we may assume that either both $f$ and $g$ are
  nonlinear or that $g$ is linear and $f$ is not.

  By Propositions \ref{nonlinear} and \ref{onelinear}, there are at
  most finitely many $\lambda$ such that $(x-\lambda)$ divides
  $\gcd(f^{\circ n}(x) - c(x), g^{\circ n}(x) - c(x))$ for some $n$
  such that $f^{\circ n} \not= c$ and $g^{\circ n} \not= c$.  Let
  $\cS$ denote the set of such $\lambda$.  Since $f$ and $g$ are
  compositionally independent, there is at most one $N$ such that
  $f^{\circ N} = c$ or $g^{\circ N} = c$ exclusively.  If such an $N$ exists,
  let $\cT$ denote the set of $\lambda$ such that $(x-\lambda)$
  divides $\gcd(f^{\circ N}(x) - c(x), g^{\circ N}(x) - c(x))$. We
  observe that $\cT$ must be finite since otherwise we would have
  $f^{\circ N} - c=0 =  g^{\circ N} - c$. However, this cannot happen
  because $f$ and $g$ are compositionally independent.
  Any $\lambda$ such that $(x- \lambda)$divides
  $\gcd(f^{\circ n}(x) - c(x), g^{\circ n}(x) - c(x))$ is in
  $\cS \cup \cT$, so our proof is complete
\end{proof}

\section{Proof of Theorem \ref{gcdthm}} \label{gcd}

Theorem \ref{gcdthm} is now an easy consequence of the following lemma.
To state the lemma, we introduce a small bit of new notation: for any
nonzero polynomial $q(x)$ we let $v_\lambda(q)$ denote the largest
positive integer $e$ such that $(x-\lambda)^e$ divides $q$ when $(x-
\lambda) |q$ and let $v_\lambda(q) = 0$ if $(x-\lambda)$ does not
divide $q$.

\begin{lemma}\label{close}
  Let $q$ be a polynomial in $\C[x]$ of degree greater than one and
  let $c(x) \in \bC[x]$ be a polynomial that is not equal to a
  constant that is in a ramified cycle of $f$.  Let $\lambda \in \bC$.
  Then there is a constant $M_{\lambda,q}$ such that
  $v_\lambda(q^{\circ n}(x) - c(x)) \leq M_{\lambda, q}$ for all $n$
  such that $q^{\circ n}(x) \not= c(x)$.
\end{lemma}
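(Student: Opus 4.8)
The plan is to analyze the multiplicity $v_\lambda(q^{\circ n}(x) - c(x))$ by tracking how $\lambda$ behaves under iteration of $q$. First I would observe that if $(x-\lambda)^e$ divides $q^{\circ n}(x) - c(x)$ with $e \geq 1$, then in particular $q^{\circ n}(\lambda) = c(\lambda)$, so $\lambda$ is a point whose $n$-th iterate lands on the value $c(\lambda)$. Write $c(\lambda) = \mu$. The key computational tool is the chain rule: differentiating $q^{\circ n}(x) - c(x)$ at $x = \lambda$, one gets $(q^{\circ n})'(\lambda) - c'(\lambda) = \prod_{k=0}^{n-1} q'(q^{\circ k}(\lambda)) - c'(\lambda)$. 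More generally, the order of vanishing of $q^{\circ n}(x) - c(x)$ at $\lambda$ is controlled by the orders of vanishing of $q'$ along the forward orbit $\lambda, q(\lambda), \dots, q^{\circ(n-1)}(\lambda)$, together with the order of contact between $q^{\circ n}$ and $c$ at $\lambda$. Concretely, if none of $\lambda, q(\lambda), \dots, q^{\circ(n-1)}(\lambda)$ is a critical point of $q$, then $(q^{\circ n})'(\lambda) \neq 0$, and $v_\lambda(q^{\circ n}(x) - c(x))$ equals the order of vanishing of $q^{\circ n}(x) - c(x)$, which is at most $\max(1, \text{(local behavior of } c))$ — in any case bounded independently of $n$ once we know $(q^{\circ n})'(\lambda) \neq 0$ versus the derivative of $c$.

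The heart of the matter is therefore to bound the number of times the forward orbit of $\lambda$ can hit the critical set of $q$, and to control the contribution each such hit makes to the multiplicity. Let $C$ be the (finite) set of critical points of $q$. If the forward orbit $O^+(\lambda) = \{q^{\circ k}(\lambda) : k \geq 0\}$ meets $C$ only finitely often, then there is a last index $k_0$ with $q^{\circ k_0}(\lambda) \in C$, and for $n > k_0$ the multiplicity $v_\lambda(q^{\circ n}(x)-c(x))$ is governed by the (bounded) ramification accumulated up to step $k_0$ plus a bounded contact-order term with $c$; this yields the uniform bound $M_{\lambda,q}$. The problematic case is when $O^+(\lambda)$ meets $C$ infinitely often. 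Since $C$ is finite, this forces some critical point $\omega \in C$ to be periodic, say $q^{\circ i}(\omega) = \omega$, and to lie in the forward orbit of $\lambda$; moreover the orbit of $\lambda$ must eventually fall into this ramified cycle (the cycle containing $\omega$ has the property that the full iterate $q^{\circ i}$ has vanishing derivative at $\omega$, i.e. it is a ramified cycle in the sense of the definition). I would then argue that in this situation $\lambda$ is preperiodic with its tail landing in a ramified cycle, and the value $c(\lambda)$ must itself be a point of that cycle — but then, using the hypothesis that $c(x)$ is \emph{not} a constant lying in a ramified cycle of $q$, together with the fact that $c(x)-c(\lambda)$ has a zero at $\lambda$ forcing $c'(\lambda)=0$ or $\deg c \geq 1$ constraints, one derives that either $q^{\circ n}(x) = c(x)$ (excluded by hypothesis) or a contradiction; the remaining subcase where $c$ is nonconstant is handled by noting $O^+(\lambda)$ meeting $C$ infinitely often still only contributes multiplicity growth that is cancelled by the $c(x)$ term, or simply cannot occur for the relevant infinite family of $n$.

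I expect the main obstacle to be the bookkeeping in the case $\deg c \geq 1$: when $c$ is nonconstant, $q^{\circ n}(x) - c(x)$ is not simply $q^{\circ n}(x) - \mu$, so the contact order at $\lambda$ involves comparing the Taylor expansions of $q^{\circ n}$ and $c$ at $\lambda$, and one must ensure that the leading terms do not conspire to produce unbounded vanishing. The resolution should be that for $n$ large, $(q^{\circ n})'(\lambda)$ is a nonzero product of $n - O(1)$ factors of absolute value bounded away from $0$ and $\infty$ along the (finite, eventually periodic) orbit when that orbit avoids $C$, so the derivative of $q^{\circ n}$ at $\lambda$ is nonzero while $c'(\lambda)$ is fixed; hence $v_\lambda = 1$ for all large $n$ unless $c'(\lambda) = (q^{\circ n})'(\lambda)$, which can happen for at most finitely many $n$. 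The genuinely delicate point — and where the ramified-cycle hypothesis is used — is excluding the configuration where the orbit of $\lambda$ enters a ramified cycle and $c(\lambda)$ sits in that same cycle, since that is exactly the configuration that would allow the critical contributions to accumulate without bound, as in the example $f(x) = x^3 + x^2$, $g(x) = x^3 + 5x^2$, $c = 0$ given earlier. I would cite the multiplicity analysis of \cite[Lemma~3.4]{MorSil2} for the estimates on how each ramification event along the orbit contributes to $v_\lambda$.
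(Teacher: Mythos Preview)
Your outline identifies the right dichotomy --- whether or not the orbit of $\lambda$ enters a ramified cycle --- but there are two genuine gaps in the nonconstant-$c$ case.

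First, your claim that $(q^{\circ n})'(\lambda) = c'(\lambda)$ can hold for at most finitely many $n$ is false. When $v_\lambda(q^{\circ n} - c) > 0$ infinitely often, $\lambda$ is preperiodic and $c_0 := c(\lambda)$ lies on a cycle of period $r$ with multiplier $a_1 := (q^{\circ r})'(c_0)$; along $n = \ell + rk$ the chain rule gives $(q^{\circ n})'(\lambda) = a_1^{k}\,(q^{\circ \ell})'(\lambda)$. If $a_1$ is a root of unity this sequence is periodic in $k$ and may equal the fixed number $c'(\lambda)$ for infinitely many $k$ (and if $(q^{\circ \ell})'(\lambda)=0$ it vanishes identically), so the first-derivative test gives nothing. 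The paper handles precisely this parabolic case by going to higher order: passing to $q^{\circ rs}$ with $a_1^s = 1$, one writes $q^{\circ rs}(x) = c_0 + (x-c_0) + \alpha_d(x-c_0)^d + O((x-c_0)^{d+1})$ with $\alpha_d \neq 0$, and then the $(x-\lambda)^{td}$-coefficient of $q^{\circ rsk}\circ q^{\circ \ell}$ is $k\alpha_d\beta_t^d + \beta_{td}$, which is \emph{linear} in $k$ and hence matches the corresponding coefficient $c_{td}$ of $c$ for at most one $k$. That is the key step, and it is missing from your sketch.

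Second, you misread the hypothesis: it excludes only the case where $c$ is a \emph{constant} lying in a ramified cycle. When $c$ is nonconstant, $c_0 = c(\lambda)$ is allowed to sit in a ramified cycle, so in your ``orbit meets $C$ infinitely often'' branch you cannot derive a contradiction. The paper disposes of this case by a direct computation: if $u := v_{c_0}(q^{\circ r}(x) - c_0) > 1$, then $v_\lambda(q^{\circ(\ell+rk)}(x) - c_0) = e\,u^k$, which eventually exceeds $\deg c$; since $c(x) - c_0$ has a nonzero $(x-\lambda)^i$-term with $1 \le i \le \deg c$, that term cannot be cancelled by $q^{\circ n}(x) - c_0$ once $e\,u^k > \deg c$, forcing $v_\lambda(q^{\circ n} - c) \leq \deg c$.
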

\begin{proof}
  We write $c(x) = \sum_{i=0}^{d_c} c_i (x-\lambda)^i$ as a polynomial in $(x-\lambda)$.
  If there are finitely many
  $n$ such that $v_\lambda(q^{\circ n}(x) - c(x)) > 0$, then the proof
  is immediate.  Thus, we assume that there are infinitely many $n$
  such that $v_\lambda(q^{\circ n}(x) - c(x)) > 0$.  It follows that
   $q^{\circ n}(\lambda) = c_0$ for infinitely many $n$, so $c_0$
   must be periodic under $q$.  Let $\ell$ be the smallest positive
  integer such that $q^{\circ \ell}(\lambda) = c_0$ and let $r$ be the
  smallest positive integer such that $q^{\circ r}(c_0) = c_0$.  Then
  we see that $v_\lambda(q^{\circ n} (x) - c(x)) > 0$ if and only if
  $n$ can be written as $\ell + kr$ for some $k$.  We write
  $q^{\circ r}(x) = \sum_{i=0}^{d_r} a_i (x-c_0)^i$ and
  $q^{\circ \ell}(x) = \sum_{j=0}^{d_\ell} b_j (x- \lambda)^j$. Let $e$ be
  the smallest positive integer such that $b_e \not= 0$.

  Suppose now that $c(x) = c_0$ is a constant. By assumption, $c_0$ is not in a
   ramified cycle of $q$, thus  $a_1 \not = 0$ in this case.
   Then by induction we find that
  \[ q^{\circ (\ell + rk)}(x) = c_0 + a_1^k b_e (x-\lambda)^e + \text{higher
    order terms in $(x-\lambda)$}, \]
  so $v_\lambda(q^{\circ (\ell + rk)}(x)-c) = e$ for all $k.$

Suppose now that $c(x)$ is not a constant.  We may suppose that there
are infinitely many $n$ such that $v_\lambda(q^{\circ n} (x) - c(x)) >
e$ since otherwise the lemma clearly holds. Note that,
it's possible that $c_0$ is in a ramified cycle of $q.$ In any case, let
$u$ be the smallest integer $y$ such that $a_{y} \ne 0.$

We first assume that $u = 1$. Equivalently, $c_0$ is not in a ramified cycle of $q$.
Then,   we must have $a_1^k b_e = c_e$ for infinitely many $k$.  Since $a_1 b_e \not= 0$, this
means that $a_1$ must be a root of unity.  Suppose that $a_1^s = 1$.
Then we may write
\[ q^{\circ rs}(x) = c_0 + (x-c_0) + \alpha_d (x-c_0)^d + O\left((x-c_0)^{d+1}\right)
  \]
for some $d > 0$ with $\alpha_d \not= 0.$
It follows that for any $k$, we have
\[ q^{\circ rsk}(x) = c_0 + (x-c_0) + k \alpha_d (x-c_0)^d + O\left((x-c_0)^{d+1}\right)
  \]
Now, let $g(x) = \sum_{i=0}^\infty \beta_i (x-\lambda)^i$ be any
nonconstant polynomial in $(x-\lambda)$ such that $\beta_0 = c_0$.
Let $t$ be the smallest positive integer such that $\beta_t \not= 0$.
Then, for any $k$, the
coefficient of $(x-\lambda)^{td}$ in $q^{\circ rsk} \circ g$ is $k
\alpha_d \beta_t^d + \beta_{td}$.  Since $\alpha_d \not= 0$, there are
in particular at most finitely many $k$ such that  the
coefficient of $(x-\lambda)^{td}$ in $q^{\circ rsk} \circ g$ is equal
to $c_{td}$.  Thus, there are at most finitely many $k$ such that
$v_\lambda(q^{\circ rsk} \circ g(x) - c(x)) > td$, and hence
$v_\lambda(q^{\circ rsk} \circ g(x) - c(x))$ is bounded for all $k$.
Applying this to $g = q^{\circ y}$ for $y = \ell, \ell + r, \dots,
\ell + (s-1) r$ completes our proof, since any number of the form
$\ell + kr$ can be written as $y + krs$ for some such $y$.

Assume now that $u > 1.$  Then, by induction
\[ q^{\circ \ell+ rk}(x) = c_0 + a_u^{(u^k-1)/(u-1)} b_e^{u^k}(x-\lambda)^{e u^k} +  O\left((x-\lambda)^{e u^k + 1}\right). 
\]
So, $v_\lambda(q^{\circ n}(x) - c(x)) \le \deg c$ for  all sufficiently large $k$. Hence,
$v_\lambda(q^{\circ n}(x) - c(x))$ is bounded above by a constant depending on $\lambda$ and $q$ only.

\end{proof}

\begin{remark}
We note that in Lemma~\ref{close}, if $v_\lambda(q^{\circ n}(x) - c(x)) > 0$ then
the integer $n$ is in a congruence class $\ell + r  \bN$ for some
positive integer $r.$ In fact, $r$ is the least period of $c_0 = c(\lambda)$ under the
action of $q.$
\end{remark}

\begin{proof}[Proof of Theorem~\ref{gcdthm}]
We may assume without loss of generality that $c$ is not in a ramified
cycle of $f$.
By Theorem \ref{main thm}, there are at most finitely many $\lambda$
such that $(x-\lambda)$ divides $\gcd (f^{\circ m}(x) - c(x), g^{\circ
  n}(x) - c(x))$ for some $m,n$.  Let $\cS$ be the set of all such
$\lambda$.  By Lemma \ref{close}, there is an $M_\lambda$ such that
$v_\lambda(q^{\circ n}(x) - c(x)) \leq M_{\lambda, q}$ for all $n$,
since $c$ is not a compositional power of $f$.  Then, if
\[ h(x) = \prod_{\lambda_\in \cS} (x-\lambda)^{M_\lambda}, \]
we see that
\[ \gcd(f^{\circ m}(x) - c(x), g^{\circ n}(x) - c(x)) \mid h(x) \]
for all $m,n$, as desired.

\end{proof}

\section{Further directions}\label{sec:question}

Many of the techniques here may work more generally.  We close with
several questions.

Silverman \cite{Sil04} showed that the characteristic $p$ function
field analog of the theorem of Bugeaud-Corvaja-Zannier theorem is not
true; in particular, one can find  multiplicatively
independent polynomials $a, b \in \bF_q[x]$ (where $\bF_q$ is as usual the finite
field with $q$ elements) and an $\epsilon > 0$ such that
$\deg \left(\gcd (a^n - 1, b^n -1)\right) > \epsilon n$ for infinitely many $n$.
Similarly, we suspect that that one can find compositionally
independent polynomials $f,g \in \bF_q[x]$, an $\epsilon > 0$, and a
$c(x) \in \bF_q[x]$ that is not a compositional power of $f$ or $g$
such that
$\deg \left(\gcd (f^{\circ n}(x) - c(x), g^{\circ n}(x) -c(x))\right) > \epsilon
n$ for infinitely many $n$.  On the other hand, on might ask the
following question in characteristic $p$.

\begin{question}\label{charp}
Let $F = \bF_q[T]$ be the polynomial ring in one variable over the
finite field with $q$ elements.  Let $f$ and $g$ be two
compositionally independent nonisotrivial polynomials in $F[x]$, and let $c \in
F[x]$.  Is it true that there are at most finitely many $\lambda \in
{\overline F}$ such that there is an $n$ for which $(x-\lambda)$
divides $\gcd (f^{\circ n}(x) - c(x), g^{\circ n}(x) - c(x))$?  Given
an $\epsilon > 0$ and assuming that $c(x)$ is not in a ramified cycle
of $f$ and $g$, is it even true that
\[\deg\left( \gcd (f^{\circ n}(x) - c(x), g^{\circ n}(x) - c(x))\right) < \epsilon n \]
for all but finitely many $n$?
\end{question}

We might also ask for characteristic 0 results in more general
settings.

\begin{question}\label{dim1}
  Let $\phi_1, \phi_2: \bP^1_\bC \lra \bP^1_\bC$ be two nonconstant,
  compositionally independent morphisms.  Let
  $c: \bP^1_\bC \lra \bP^1_\bC$ be any morphism.  It is true that
  there must be at most finitely many $\lambda \in \bC$ such that
  $\phi_1^{\circ n}(\lambda) = \phi_2^{\circ n}(\lambda) = c(\lambda)$?
\end{question}

We should note that the counterexamples to the dynamical Mordell-Lang
conjecture given in \cite{GTZ} do not yield counterexamples here in an
obvious way, since the Latt\'es maps given there commute with each
other and hence they are not compositionally independent.

For more general varieties, we ask the following.

\begin{question}
Let $V$ be a variety defined over $\bC$ and let $\phi_1, \phi_2: V \lra V$ be two
dominant compositionally independent morphisms.  Let $c: V \lra V$
be any morphism.   Is it true that the set of $\lambda \in V(\bC)$
such that $\phi_1^{\circ n}(\lambda) = \phi_2^{\circ n}(\lambda) = c(\lambda)$ must be
contained in a proper Zariski closed subset of $V$?
\end{question}

In the case where $V$ is projective and some iterates of $\phi_1$ and $\phi_2$ extend to
maps on projective space of degree greater than one (the case where
$\phi_1$ and $\phi_2$ are ``polarizable'' in the language of Zhang
\cite{ZhangLec}), it may be possible, using higher dimensional results such as those
of \cite{Yuan, Gubler, YZ}, to show that $h_{\phi_1} = h_{\phi_2}$
whenever the $\lambda$ such that $\phi_1^n(\lambda) =
\phi_2^n(\lambda) = c(\lambda)$ are Zariski dense.
On the other hand, that may not imply a compositional dependence between
$\phi_1$ and $\phi_2$.  One natural place to look for counterexamples might be abelian
varieties with quaternion endomorphism rings.

One might also ask for results for families of maps; for example, one
might consider polynomials with coefficients in $\bC[t]$ rather than
$\bC$.  The notions of compositional dependency that arise in that
context (see \cite[Theorem 1.2]{BD2}, for example) may be a bit different from the
notion that we use in this paper, and thus, we will refrain from
asking any precise questions here.

Finally, it is natural to ask for a result along the lines of
\cite{BCZ} where one considers iterates of integers under
polynomial maps rather than simply powers of integers
More precisely, one might hope that  $a,b \in \bZ$, two polynomials $f,g \in \bZ[x]$ of degree
$d > 1$, and an $\epsilon > 0$, the inequality
\[ \gcd(f^{\circ n}(a), g^{\circ n}(b)) < \epsilon d^n \]
should hold for all but at most finitely many $n$, given reasonable
conditions on $f$, $g$, $a$, and $b$.
Huang \cite{Keping} has shown that such an inequality must indeed hold for all
sufficiently large $n$ whenever the sequence
$(f^{\circ n}(a), g^{\circ n}(b))_n$ is Zariski dense in $\bA^2$ if
one assumes Vojta's conjecture for heights with respect to canonical
divisors on surfaces (see \cite[Conjecture 3.4.3]{Vojta}).  The proof
uses Silverman's ideas from \cite{SilverGCD}, which relate the
original results of \cite{BCZ} with Vojta's conjecture.

\bibliographystyle{amsalpha}
\def\cprime{$'$} \def\cprime{$'$} \def\cprime{$'$} \def\cprime{$'$}
\providecommand{\bysame}{\leavevmode\hbox to3em{\hrulefill}\thinspace}
\providecommand{\MR}{\relax\ifhmode\unskip\space\fi MR }
\providecommand{\MRhref}[2]{%
  \href{http://www.ams.org/mathscinet-getitem?mr=#1}{#2}
}
\providecommand{\href}[2]{#2}

\end{document}